\newtheorem{theorem}{Theorem}[section]
\newtheorem{lemma}[theorem]{Lemma}
\newtheorem{corollary}[theorem]{Corollary}
\newtheorem*{introthm}{Main Theorem}
\theoremstyle{definition}
\newtheorem{remark}[theorem]{Remark}
\newtheorem{definition}[theorem]{Definition}
\newtheorem{example}[theorem]{Example}
\newcommand\eg{{\em e.g., }}
\newcommand\ie{{\em i.e., }}
\newcommand{\dash}{\nobreakdash-\hspace{0pt}}
\DeclareMathOperator\ad{ad}
\DeclareMathOperator\Aut{Aut}
\DeclareMathOperator\ch{char}
\DeclareMathOperator\End{End}
\DeclareMathOperator\Gal{Gal}
\DeclareMathOperator\id{id}
\DeclareMathOperator\Mat{Mat}
\DeclareMathOperator\Rad{Rad}
\DeclareMathOperator\Span{span}
\DeclareMathOperator\Sym{Sym}
\DeclareMathOperator\Cent{\mathbf{Z}}
\newcommand\la{\langle}
\newcommand\ra{\rangle}
\newcommand\size[1]{\lvert #1\rvert}
\newcommand\FF{\mathbb{F}}
\newcommand\RR{\mathbb{R}}
\newcommand\ZZ{\mathbb{Z}}
\newcommand\op{\mathrm}
\renewcommand\cal{\mathcal}
\newcommand\al{\alpha}
\newcommand\lm{\lambda}
\newcommand\rt[1]{{\rm #1}}
\newcommand\affrt[1]{\tilde{\rm #1}}
\newcommand{\Fspace}{\Gamma}
\def\qed {{
	\parfillskip=0pt 
	\widowpenalty=10000 
	\displaywidowpenalty=10000 
	\finalhyphendemerits=0 
	%
	\leavevmode 
	\unskip 
	\nobreak 
	\hfil 
	\penalty50 
	\hskip.2em 
	\null 
	\hfill 
	$\square$
	%
	\par}} 
\renewcommand*\env@matrix[1][\arraystretch]{%
	\edef\arraystretch{#1}%
	\hskip -\arraycolsep
	\let\@ifnextchar\new@ifnextchar
	\array{*\c@MaxMatrixCols c}}
\begin{document}
\abovedisplayskip=0.5em
\belowdisplayskip=0.5em

\title{Jordan algebras and $3$-transposition groups}
\author{Tom De Medts \and Felix Rehren}
\date{October 5, 2015}
\maketitle

\begin{abstract}
	An idempotent in a Jordan algebra induces a Peirce decomposition of the algebra
	into subspaces whose pairwise multiplication satisfies certain fusion rules $\Phi(\nicefrac{1}{2})$.
	On the other hand, $3$\dash transposition groups $(G,D)$ can be algebraically characterised
	as Matsuo algebras $M_\al(G,D)$ with idempotents satisfying the fusion rules $\Phi(\al)$ for some $\al$.
	We classify the Jordan algebras $J$ which are isomorphic to a Matsuo algebra $M_{\nicefrac{1}{2}}(G,D)$,
	in which case $(G,D)$ is a subgroup of the (algebraic) automorphism group of $J$;
	the only possibilities are $G = \Sym(n)$ and $G = 3^2:2$.
	Along the way, we also obtain results about Jordan algebras associated to root systems.
\end{abstract}
\noindent
\emph{MSC2010:} 17C50, 20B05 (primary), 20F55, 17C27, 17C30, 05B25 (secondary)
\bigskip

	The celebrated theory of $3$-transposition groups,
	developed by B.~Fischer in the 1960s,
	captures the symmetric groups,
	certain finite groups of Lie type over fields of small characteristic,
	and the sporadic Fischer groups,
	and had a profound impact on 20th century group theory.
	The inherent combinatorial data has a formulation in terms of certain incidence geometries,
	called Fischer spaces,
	captured by F.~Buekenhout's well-known geometrical characterisation of $3$\dash transposition groups \cite{buek}; see also \cite{asch3}*{Section~18}.
	Recently an algebraic characterisation of $3$\dash transposition groups became available \cite{hrs}---%
	in terms of a special class of nonassociative algebras called {\em axial algebras}.
	(We refer to Definition~\ref{def:alg}\ref{alg:axial} below for a precise description.)

	These axial algebras have their origin
	in the Griess algebra, the 196884-dimensional nonassociative algebra
	whose automorphism group is the Monster,
	the largest sporadic simple group and itself a $6$-transposition group.
	Developments in vertex operator algebras
	provided a wealth of new examples of Griess-like algebras.
	Via foundational work of M.~Miyamoto and A.~A.~Ivanov \cites{miyamoto,ivanov},
	they were axiomatised as algebras generated by idempotents
	whose eigenvectors multiply according to some {\em fusion rules $\Phi$}.
	It is this phenomenon which was singled out in \cite{hrs} to study the much larger class
	of {\em axial algebras} (with respect to the fusion rules $\Phi$).
	Remarkably, the simplest case $\Phi(\al)$, in Table~\ref{tbl-jordan},
	of $\ZZ/2$-graded fusion rules
	exactly characterises $3$-transposition groups.

	\begin{table}[ht]
	\begin{center}
	\renewcommand{\arraystretch}{1.2}
	\setlength{\tabcolsep}{0.75em}
	\begin{tabular}{c|ccc}
			$\star$	& $1$ & $0$ & $\al$ \\
		\hline
			$1$ & $\{1\}$ & $\emptyset$ & $\{\al\}$ \\
			$0$ &	& $\{0\}$ & $\{\al\}$ \\
			$\al$ &	&	& $\{1,0\}$ \\
	\end{tabular}
	\end{center}
	\caption{Jordan fusion rules $\Phi(\al)$}
	\label{tbl-jordan}
	\end{table}

	We apply this new point of view to study a classical subject
	in nonassociative algebras: {\em Jordan algebras}.
	It is a well-known fact going back to the beginnings of the subject,
	that the eigenvalues of any idempotent in a Jordan algebra are $\{1,0,\nicefrac{1}{2}\}$
	and the eigenspaces satisfy the fusion rules $\Phi(\nicefrac{1}{2})$.
	This is the {\em Peirce decomposition}.
	It turns out that $\al = \nicefrac{1}{2}$
	leads to very distinguished behaviour for $\Phi(\al)$-axial algebras;
	for all other values of~$\al$,
	every $\Phi(\al)$-axial algebra
	is essentially the {\em Matsuo algebra} of a $3$-transposition group \cite{hrs}*{Theorem 5.4(b) and Theorem 6.3}.
	The wilder situation for $\Phi(\nicefrac{1}{2})$
	accommodates Matsuo algebras and Jordan algebras, but also admits further possibilities.

	In this paper, we answer the question:
	which Jordan algebras are Matsuo algebras?
    As a consequence, each of the resulting Jordan algebras is spanned by idempotents
    whose {\em Peirce reflections} (see Lemma~\ref{lem jordan axis}) generate a $3$-transposition group inside $\Aut(J)$.

	\begin{introthm}[%
		Theorems~\ref{thm jordan matsuo an},
		\ref{thm jordan matsuo pi3},
		\ref{thm jordan matsuo pi3-ch3},
		\ref{thm jordan matsuo}%
	]
		\label{thm intro}

		Let $\FF$ be a field, $\ch(\FF) \neq 2$,
		and let $J$ be a Jordan algebra over $\FF$ which is also a Matsuo algebra.
		Then $J$ is a direct sum of Matsuo algebras $J_i = M_{1/2}(G_i,D_i)$ corresponding to $3$-transposition groups $(G_i,D_i)$, where for each $i$,
		\begin{compactenum}
			\item
				either $G_i = \Sym(n)$,
				and $J_i$ is the Jordan algebra of $n \times n$ symmetric matrices over $\FF$ with zero row sums;
			\item
				or $G_i \cong 3^2:2$, and
				\begin{compactenum}
					\item
						either $\ch(\FF) \neq 3$ and $J_i$ is the Jordan algebra of hermitian $3\times3$ matrices
						over the quadratic \'etale extension $E=\FF[x]/(x^2+3)$,
					\item
						or $\ch(\FF) = 3$ and $J_i$ is a certain $9$-dimensional degenerate Jordan algebra with an $8$\dash dimensional radical.
				\end{compactenum}
		\end{compactenum}
	\end{introthm}

	The paper is organised as follows.
	Section~\ref{sec comb pre} recalls elementary facts
	on $3$-transposition groups, Fischer spaces and root systems.
	Section~\ref{sec alg pre} gives definitions and basic results
	for Jordan and Matsuo algebras and introduces fusion rules.
	Section~\ref{sec-sym} proves that the Matsuo algebra
	for $\Sym(n)$ is the Jordan algebra of zero-sum $n\times n$ symmetric matrices,
	and gives details of a construction of a Jordan algebra
	of projection matrices coming from a root system.
	Section~\ref{sec-P3} proves that the examples arising in (ii) of the Main Theorem are indeed Jordan algebras;
	in particular (a) involves recovering a Peirce decomposition in the Matsuo algebra,
	and for (b) we give a full description of a chain of ideals in the degenerate algebra.
	Section~\ref{sec-class} finally shows that these are the only Matsuo algebras
	which are also Jordan algebras.

	The second author would like to thank the Department of Mathematics of Ghent University,
	where this work was started, for their hospitality.
	Both authors are grateful to the referee for his valuable suggestions that greatly improved
	the exposition of the results.

\section{$3$-transposition groups, Fischer spaces and root systems}
\label{sec comb pre}
	In this section, we recall the definition of $3$-transposition groups and Fischer spaces,
	and we explain how each simply-laced root system gives rise to a Fischer space.
	\begin{definition}[\cite{asch3}]
		\label{def 3trposgp}
		A {\em $3$-transposition group} is a pair $(G,D)$ where $G$ is a group
		generated by $D\subseteq G$ a subset of involutions closed under conjugation
		such that, for any $c,d\in D$, $\size{cd}\leq3$.
	\end{definition}
	\begin{remark}
		Fischer's definition of a $3$-transposition group requires $D$ to be a single conjugacy class of involutions
		(see, \eg \cite{fischer}).
		This is not a severe restriction.
		Indeed, suppose that $(G,D)$ is a $3$-transposition group as in Definition~\ref{def 3trposgp} where $D = D_1 \cup D_2$
		is a $G$-invariant partition of $D$.
		Let $G_1 = \langle D_1 \rangle$ and $G_2 = \langle D_2 \rangle$; then $(G_1, D_1)$ and $(G_2, D_2)$ are again $3$-transposition groups,
		$[G_1,G_2]=1$, and $G = G_1 G_2$.
		Continuing this process, we can write $G$ as the central product of $3$-transposition groups $(G_i, D_i)$ where
		each $D_i$ is a single conjugacy class of involutions in $G_i$.
		See \cite{asch3}*{p.\@ 30, (8.2)}.
	\end{remark}

    To define Fischer spaces, we first need some general terminology and notation from incidence geometry.
	A {\em partial linear space}  is a pair $(\Fspace,\cal L)$,
	where $\Fspace$ is a set of {\em points}
	and $\cal L\subseteq 2^\Fspace$ is a set of {\em lines}
	such that any $\ell\in\cal L$ has size at least $2$,
	and such that any two distinct lines intersect in at most one point.
	Often $\Fspace$ alone refers to $(\Fspace,\cal L)$.
	A partial linear space $(\Fspace, \cal L)$ is called a {\em partial triple system}
	if every line has exactly $3$ points.
	In a partial triple system $\Fspace$,
	for any two collinear points $x,y\in\Fspace$
	there exists a unique line $\ell$ connecting $x$ and $y$,
	and a unique element denoted $x\wedge y\in\Fspace$
	such that $\ell = \{x,y,x\wedge y\}$.
	A subset $S$ of $\Fspace$ is called a {\em subspace} if it is closed under the operation $\wedge$,
	and if $T$ is any subset of $\Fspace$, then we write $\langle T \rangle$ for the
	{\em subspace generated by $T$}, \ie the smallest subspace of $\Fspace$ containing $T$.

	In such a $\Fspace$, for two distinct points $x,y\in\Fspace$
	we write $x\sim y$ if $x$ and $y$ are collinear,
	that is, if there exists a line containing $x$ and $y$,
	and $x\not\sim y$ otherwise.
	We can partition $\Fspace$ with respect to $x$ as
	$\{ x \} \cup x^\sim \cup x^{\not\sim}$,
	where $x^\sim=\{y\in\Fspace\mid x\sim y\}$
	and $x^{\not\sim}=\{y\in\Fspace\mid x\not\sim y\}$.
	A point $x \in \Fspace$ is called {\em isolated} if $x^\sim = \emptyset$.
    We call $\Fspace$ {\em nondegenerate} if it has no isolated points.

	We will need two specific examples of partial triple systems, namely the {\em dual affine plane of order $2$}, denoted by $\cal P_2^\vee$,
	and the {\em affine plane of order $3$}, denote by $\cal P_3$; see Figure~\ref{fig-projpl}.

	\begin{figure}[ht!]
	\begin{center}
	\begin{tikzpicture}
		\useasboundingbox (-3.0cm,3.0cm) rectangle (3.0cm,-3.5cm);
		\node at (0,-3.5cm) {$\cal P_2^\vee$};

		\tikzstyle{every node}=[draw,circle,fill=white,minimum size=4pt,inner sep=0pt]
		\draw
			node at +(120:2.1cm) [label=left:$1$] {}
			node at +(60:2.1cm) [label=right:$2$] {}
			node at +(0:2.1cm) [label=right:$3$] {}
			node at +(300:2.1cm) [label=right:$4$] {}
			node at +(240:2.1cm) [label=left:$5$] {}
			node at +(180:2.1cm) [label=left:$6$] {}
		;

		\draw[rounded corners=0.2cm] (120:2cm) -- (60:2cm) -- (180:2cm) -- cycle;
		\draw[rounded corners=0.2cm] (60:2cm) -- (0:2cm) -- (300:2cm) -- cycle;
		\draw[rounded corners=0.2cm] (180:2cm) -- (300:2cm) -- (240:2cm) -- cycle;
		\draw[rounded corners=0.2cm] (120:2cm) -- (0:2cm) -- (240:2cm) -- cycle;

	\end{tikzpicture}
	\quad\quad\quad
	\begin{tikzpicture}
		\useasboundingbox	(-1.0cm,1.0cm) rectangle (5.0cm,-5.5cm);
		\node at (2cm,-5.5cm) {$\cal P_3$};

		\tikzstyle{every node}=[draw=white,ultra thick,
			circle,fill=white,minimum size=6pt,inner sep=0pt]
		\draw (0,0) node (1) [label=left:$1$] {}
			++(0:2cm) node (2) [label=above:$2$] {}
			++(0:2cm) node (3) [label=right:$3$] {}
			++(270:2cm) node (6) [label=right:$6$] {}
			++(180:2cm) node (5) [label=above right:$5$] {}
			++(180:2cm) node (4) [label=left:$4$] {}
			++(270:2cm) node (7) [label=left:$7$] {}
			++(0:2cm) node (8) [label=below:$8$] {}
			++(0:2cm) node (9) [label=right:$9$] {}
		;
		\tikzstyle{every node}=[draw,
			circle,fill=white,minimum size=4pt,inner sep=0pt]
		\draw (1) node {}
			(2) node {}
			(3) node {}
			(4) node {}
			(5) node {}
			(6) node {}
			(7) node {}
			(8) node {}
			(9) node {}
		;

		\draw (1) -- (2) -- (3);
		\draw (4) -- (5) -- (6);
		\draw (7) -- (8) -- (9);
		\draw (1) -- (4) -- (7);
		\draw (2) -- (5) -- (8);
		\draw (3) -- (6) -- (9);
		\draw (1) -- (5) -- (9);
		\draw (3) -- (5) -- (7);

		\draw (1) to (6);
		\draw[shift=(1)] (6) .. controls (330:7.5cm) and (300:7.5cm)	.. (8);
		\draw (8) to (1);

		\draw (3) to (4);
		\draw[shift=(3)] (4) .. controls (210:7.5cm) and (240:7.5cm)	.. (8);
		\draw (8) to (3);

		\draw (7) to (2);
		\draw[shift=(7)] (2) .. controls (60:7.5cm) and (30:7.5cm)	.. (6);
		\draw (6) to (7);

		\draw (9) to (2);
		\draw[shift=(9)] (2) .. controls (120:7.5cm) and (150:7.5cm)	.. (4);
		\draw (4) to (9);
	\end{tikzpicture}
	\end{center}
	\caption{
		The dual affine plane $\cal P_2^\vee$
		and the affine plane $\cal P_3$
	}
	\label{fig-projpl}
	\end{figure}
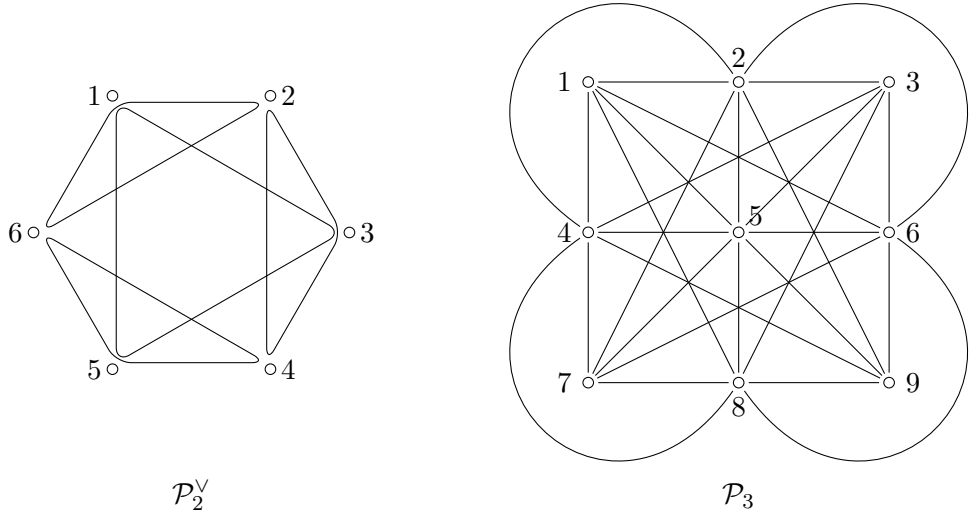

	\begin{definition}[\cite{asch3}*{Section 18}]
		\label{def-fischer-space}
		A {\em Fischer space} is a partial triple system
		for which, if $\ell_1,\ell_2$ are any two distinct intersecting lines,
		the subspace $\la\ell_1\cup\ell_2\ra$
		is isomorphic to the dual affine plane $\cal P_2^\vee$ of order $2$
		or to the affine plane $\cal P_3$ of order $3$.
		A Fischer space is said to be of {\em symplectic type}
		if $\cal P_3$ does not occur, \ie if the subspace $\la\ell_1\cup\ell_2\ra$
		is always isomorphic to~$\cal P_2^\vee$.
	\end{definition}

	There is a close connection between $3$-transposition groups and Fischer spaces:
	\begin{theorem}[\cite{buek}, \cite{asch3}*{Section~18}]\label{th:3tr-fisch}
		\begin{compactenum}
		    \item\label{3tr-fisch:i}
			Let $(G,D)$ be a $3$-transposition group.
			Let $\Fspace(G,D)$ be the partial triple system with point set $D$ and line set $\{ (c, d, c^d) \mid c,d \in D, \size{cd}=3 \}$.
			Then $\Fspace(G,D)$ is a Fischer space, and $G$ acts on $\Fspace(G,D)$ (by conjugation on the point set).
		    \item\label{3tr-fisch:ii}
			Let $\Fspace$ be a nondegenerate Fischer space.
			For each point $x \in \Fspace$, we define $\tau(x)$ to be the involutory permutation of $\Fspace$ fixing $x \cup x^{\not\sim}$ and
			interchanging the two points different from~$x$ on any line through $x$.
			Let $D = \{ \tau(x) \mid x \in \Fspace \}$.
            Then $D \subseteq \Aut(\Fspace)$, and $(\langle D \rangle, D)$ is a $3$-transposition group with trivial center.
            Moreover, $\Fspace(\langle D \rangle, D) \cong \Fspace$.
		\end{compactenum}
	\end{theorem}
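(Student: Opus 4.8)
The plan is to handle parts~\ref{3tr-fisch:i} and~\ref{3tr-fisch:ii} separately. For~\ref{3tr-fisch:i}, first check that $\Fspace(G,D)$ is a partial triple system: if $c,d\in D$ with $\size{cd}=3$ then $\la c,d\ra\cong\Sym(3)$ has exactly the three involutions $c$, $d$ and $c^d=d^c$, and any two of them generate $\la c,d\ra$. Hence two points of $D$ are collinear iff their product has order~$3$, a collinear pair lies on a unique line, every line has exactly three points, two distinct lines meet in at most one point, and $x\wedge y=x^y=y^x$ whenever $\size{xy}=3$. That $G$ acts on $\Fspace(G,D)$ by conjugation is immediate, since $D$ and the line set are conjugation-invariant.

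The real content of~\ref{3tr-fisch:i} is the Fischer space axiom. Let $\ell_1=\{x,a,c\}$ and $\ell_2=\{x,b,d\}$ be distinct lines meeting in $x$, so $\size{xa}=\size{xb}=3$, $c=x^a$, $d=x^b$, and the five points are distinct; set $H=\la x,a,b\ra$. Since $\wedge$ is conjugation, $S:=\la\ell_1\cup\ell_2\ra\subseteq D\cap H$, and the isomorphism type of $S$ is governed by that of $H/\Z(H)$. Conjugation by $x$ interchanges $a\leftrightarrow c$ and $b\leftrightarrow d$, so $\size{ab}=\size{cd}$ and $\size{ad}=\size{cb}$. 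I would first rule out $\size{ab}=\size{ad}=2$: then $a$ would centralise $b$ and $x^b$, hence the subgroup $\la b,x^b\ra=\la x,b\ra$, contradicting $\size{xa}=3$. Swapping $a\leftrightarrow c$ if necessary we may thus assume $\size{ab}=3$, and then $\size{ad}\in\{2,3\}$. If $\size{ad}=2$, a short computation in $H$ shows that $a^b=c^d=:e$ is a sixth involution and that $S=\{x,a,c,b,d,e\}$ closes under $\wedge$ with lines $\ell_1,\ell_2,\{a,b,e\},\{c,d,e\}$; comparing with Figure~\ref{fig-projpl}, this gives $S\cong\cal P_2^\vee$ (and $H/\Z(H)\cong\Sym(4)$). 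If instead $\size{ad}=3$, an analogous but longer computation produces nine involutions forming $\cal P_3$ (and $H/\Z(H)\cong 3^2\semid2$). This last step --- equivalently, determining the $3$-transposition groups generated by three transpositions whose pairwise products all have order~$3$ --- is the main obstacle, and is precisely the local analysis of \cite{buek} and \cite{asch3}*{Section~18}, which I would reproduce or cite.

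For~\ref{3tr-fisch:ii}, fix a nondegenerate Fischer space $\Fspace$ and a point $x$. Because two distinct lines meet in at most one point, the lines through $x$ partition $x^\sim$ into pairs, so $\tau(x)$ is a well-defined involution of the point set. To see $\tau(x)\in\Aut(\Fspace)$, take any line $\ell$ with $x\notin\ell$: if no line through $x$ meets $\ell$ then $\tau(x)$ fixes $\ell$ pointwise; otherwise, choosing a line $\ell'$ through $x$ meeting $\ell$, the subspace $S=\la\ell\cup\ell'\ra$ is isomorphic to $\cal P_2^\vee$ or $\cal P_3$, it is $\wedge$-closed so $\tau(x)$ preserves it, and $\tau(x)|_S$ is the ``reflection at $x$'' of $S$, which a direct inspection of the two geometries shows to be an automorphism. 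Lines through $x$ are plainly preserved, so $\tau(x)\in\Aut(\Fspace)$. From the definition one has $g\,\tau(x)\,g^{-1}=\tau(g(x))$ for every $g\in\Aut(\Fspace)$, so $D=\{\tau(x)\mid x\in\Fspace\}$ is closed under conjugation; and $x\mapsto\tau(x)$ is injective, for if $\tau(x)=\tau(y)$ with $x\ne y$ then $x\not\sim y$ (otherwise $\tau(y)$ would move $x$), and taking a line $\{x,u,v\}$ through $x$ --- available by nondegeneracy --- both reflections swap $u$ and $v$, so $\{y,u,v\}$ would be a second line on $\{u,v\}$, a contradiction.

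Finally, the $3$-transposition property, triviality of the centre, and the isomorphism all follow from one relation. If $x\not\sim y$ then $\tau(x)(y)=y$, so $\tau(x)\tau(y)\tau(x)=\tau(y)$ and $\tau(x),\tau(y)$ commute. If $x\sim y$ with $z=x\wedge y$, then $\tau(x)$ swaps $y,z$ and $\tau(y)$ swaps $x,z$, whence $\tau(x)\tau(y)\tau(x)=\tau(z)=\tau(y)\tau(x)\tau(y)$; so $\la\tau(x),\tau(y)\ra$ is a nontrivial quotient of $\Sym(3)$ with distinct generators, and since no nontrivial dihedral $2$-group satisfies the braid relation it is $\cong\Sym(3)$, giving $\size{\tau(x)\tau(y)}=3$. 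Hence $(\la D\ra,D)$ is a $3$-transposition group; its centre is trivial because any $g\in\Z(\la D\ra)$ satisfies $\tau(x)=g\,\tau(x)\,g^{-1}=\tau(g(x))$ for all $x$, so $g$ fixes every point and $g=\id$. And the bijection $x\mapsto\tau(x)$ from $\Fspace$ onto $D$ is an isomorphism $\Fspace\cong\Fspace(\la D\ra,D)$, since by the computation above $\{x,y,z\}$ is a line of $\Fspace$ exactly when $\size{\tau(x)\tau(y)}=3$ and $\tau(z)=\tau(x)^{\tau(y)}$, \ie exactly when $\{\tau(x),\tau(y),\tau(z)\}$ is a line of $\Fspace(\la D\ra,D)$. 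The one genuinely hard ingredient in the whole argument is the local classification needed in~\ref{3tr-fisch:i}.
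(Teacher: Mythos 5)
Your proposal is correct, but note that the paper does not actually prove this theorem: its ``proof'' is a pointer to \cite{asch3}*{(18.1) and (18.2)}, plus the observation that the kernel of the conjugation action of $G$ on $\Fspace(G,D)$ is $\Cent(G)$. You instead reconstruct the standard argument, and the reconstruction is sound. In part~\ref{3tr-fisch:i}, the reduction to the two cases $\size{ad}=2$ and $\size{ad}=3$ (after excluding $\size{ab}=\size{ad}=2$ and normalising via the swap $a\leftrightarrow c$, using that conjugation by $x$ interchanges $a\leftrightarrow c$, $b\leftrightarrow d$) is exactly the local analysis of \cite{asch3}*{Section~18}; your $\cal P_2^\vee$ case becomes airtight once you record that conjugating $[a,x^b]=1$ by $b$ shows $x$ centralises $e=a^b$, so that $c^d=(a^b)^x=e$ and no fifth line appears among the six points, and the remaining $\size{ad}=3$ case is, as you say, the one genuinely hard computation --- deferring it to \cite{buek} or \cite{asch3} is no worse than what the paper does for the whole statement. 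Part~\ref{3tr-fisch:ii} is handled cleanly and completely: the relation $g\tau(x)g^{-1}=\tau(g(x))$, injectivity of $x\mapsto\tau(x)$ via nondegeneracy, the braid relation forcing $\size{\tau(x)\tau(y)}=3$ for collinear points, the centre argument, and the identification of lines under $x\mapsto\tau(x)$ are all correct and amount to \cite{asch3}*{(18.1)}. What your route buys is a self-contained proof (modulo one cited sub-case) that also makes visible exactly where the two planes $\cal P_2^\vee$ and $\cal P_3$ come from; what the paper's citation buys is brevity and automatic coverage of the small degeneracies (coincidences among $x,a,b,c,d,e$, the non-identity of $\tau(x)$, lines of a subspace being lines of the ambient space) that a from-scratch argument must, and yours mostly does, dispose of explicitly.
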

    \begin{proof}
        Statement (i) is \cite{asch3}*{(18.2)}; statement (ii) is \cite{asch3}*{(18.1)}.
        See also \cite{asch3}*{p.\@~92, Example (4)}, and in particular the observation that the kernel of the action of $G$
        on its Fischer space $\Fspace(G,D)$ induced by conjugation on $D$ is precisely the center $\Cent(G)$ of $G$.
    \end{proof}

	\begin{example}\label{ex:fisch}
		\begin{compactenum}
		    \item\label{fisch:S4}
			Let $G = \Sym(4)$ and let $D$ be the conjugacy class of all transpositions in $G$.
			Then $(G,D)$ is a $3$-transposition group with Fischer space $\cal P_2^\vee$.
            See \cite{asch3}*{p.\@~93, Example (5)}.
		    \item\label{fisch:P3}
			Let $G = 3^2:2$, an elementary abelian group of order $9$ extended by an inverting involution,
            and let $D$ be the conjugacy class of this involution (which has size $9$).
			Then $(G,D)$ is a $3$-transposition group with Fischer space $\cal P_3$.
            See \cite{asch3}*{p.\@~93, Example (6)}.
		\end{compactenum}
	\end{example}

	We will also make use of {\em root systems},
	for which we refer the reader to any book on the subject,
	such as \cite{hum}*{Chapter III}.
	We will write $\rt R$ for a root system,
	and we write $\rt R = \rt R_+\cup \rt R_-$ for some partition of $\rt R$
	such that $\rt R_- = -\rt R_+$.
	(Typically, $\rt R_+$ would be the set of positive roots of $R$, but we do not require this.)

	\begin{lemma}
		\label{lem fisch roots}
		Suppose $\rt R$ is a simply-laced root system
		and $\Fspace$ is the partial triple system with point set $\rt R_+$
		and lines $\{r,s,t\}$ for distinct roots $r,s,t$
		spanning a root system of type $\rt A_2$.
		Then $\Fspace$ is a Fischer space of symplectic type,
		which we will denote by $\Fspace(\rt R)$.
	\end{lemma}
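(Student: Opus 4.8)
The plan is to verify the two defining properties of a Fischer space of symplectic type directly from the geometry of simply-laced root systems, namely (1) that $\Fspace(\rt R)$ is a partial triple system, and (2) that any two intersecting lines span a subspace isomorphic to $\cal P_2^\vee$. For (1), I would first recall that in a simply-laced root system all roots have the same length, so for any two roots $r,s$ with $r \neq \pm s$ the angle between them is either $60^\circ$, $90^\circ$, or $120^\circ$; when the angle is $60^\circ$ or $120^\circ$ the subspace $\Span_{\ZZ}\{r,s\}$ meets $\rt R$ in a root subsystem of type $\rt A_2$, and when it is $90^\circ$ it meets $\rt R$ in a subsystem of type $\rt A_1 \times \rt A_1$. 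Thus each pair of roots lies in at most one $\rt A_2$-subsystem. Next I would check that a line is well-defined on $\rt R_+$: given a type-$\rt A_2$ subsystem $\{\pm r, \pm s, \pm(r+s)\}$ (after a suitable sign choice), exactly three of its six roots lie in $\rt R_+$, and these three roots $\{a,b,c\}$ satisfy $a + b + c = 0$ up to signs — more precisely, exactly one of them is (up to sign) the sum of the other two — so the triple is intrinsic to the subsystem and does not depend on how we labelled it. Since two distinct $\rt A_2$-subsystems share at most a pair of proportional roots, hence at most one point of $\rt R_+$, any two distinct lines meet in at most one point; this gives the partial triple system, and the $\wedge$-operation sends $\{a,b\}$ to the third root of the unique $\rt A_2$-system containing them.

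For (2), suppose $\ell_1, \ell_2$ are distinct lines of $\Fspace(\rt R)$ meeting in a point $x \in \rt R_+$. Then $\ell_1 \cup \ell_2$ consists of $x$ together with four further roots $y_1, z_1 \in \ell_1$ and $y_2, z_2 \in \ell_2$, all positive, and these five roots span (over $\QQ$) at most a $2$-dimensional subspace $V$, since $\ell_1$ already spans a plane and $\ell_2$ shares the point $x$ with it but $\ell_2 \neq \ell_1$ forces — wait, this needs care: $\ell_1$ and $\ell_2$ could a priori span a $3$-space. The key observation is that the $\wedge$-closure $\la \ell_1 \cup \ell_2\ra$ consists entirely of roots of $\rt R$ lying in the span $W$ of $\ell_1 \cup \ell_2$, and $W \cap \rt R$ is itself a simply-laced root subsystem containing two $\rt A_2$-subsystems sharing a root. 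A short case analysis of rank-$\le 3$ simply-laced root systems shows the only such subsystem is of type $\rt A_3$ (type $\rt A_1^k$, $\rt A_1 \times \rt A_2$ do not contain two $\rt A_2$-systems sharing a root, and $\rt A_3$ does, while $\rt D_4$ or higher would require $\ell_1,\ell_2$ to have been chosen inside a proper $\rt A_3$). Then I would compute directly: in type $\rt A_3$, with positive roots $e_i - e_j$ ($1 \le i < j \le 4$) and say $x = e_1 - e_3$, the $\wedge$-closure of two lines through $x$ is exactly the set of six positive roots of $\rt A_3$, and one checks this is the incidence structure $\cal P_2^\vee$ — six points, four lines of size three, every point on two lines. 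Since $\cal P_3$ never arises (it would need a point on four lines, impossible in a $\wedge$-closure of two lines through a common point in a root system), $\Fspace(\rt R)$ is of symplectic type.

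The main obstacle I anticipate is the rank-$3$ classification step: I must be certain that when two type-$\rt A_2$ subsystems of $\rt R$ share exactly one root (up to proportionality), the subsystem they generate is always of type $\rt A_3$ and never something larger like $\rt D_4$. This should follow because $\la \ell_1 \cup \ell_2 \ra$ is generated by only five roots lying in a space of dimension at most $3$, and one verifies by inspecting the finite list of rank-$\le 3$ simply-laced subsystems that $\rt A_3$ is forced — equivalently, that $\{x, y_1, z_1, y_2, z_2\}$ together with their pairwise $\wedge$'s closes up after one step into the $6$-element positive system of $\rt A_3$. An efficient way to present this is to coordinatise: pick the $\rt A_2$-subsystem through $\ell_1$ as $\{\pm\al, \pm\bt, \pm(\al+\bt)\}$ and observe that $\ell_2$ contributes one new root making a $60^\circ$ angle with $x$, which forces a third simple root $\gm$ with $\la \ep_i, \ep_j\ra$ of type $\rt A_2$ for the relevant pairs, i.e. the Dynkin diagram $\al - \gm$ or similar, giving $\rt A_3$ exactly. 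Once this is in place, the identification with $\cal P_2^\vee$ is a finite check against Figure~\ref{fig-projpl}, and the non-occurrence of $\cal P_3$ is immediate from the line count.
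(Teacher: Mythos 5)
Your proposal is correct and follows essentially the same route as the paper: uniqueness of the $\rt A_2$-plane through two non-orthogonal roots gives the partial triple system, and the observation that two distinct intersecting lines span a $3$-dimensional space whose root subsystem must be of type $\rt A_3$ (the only simply-laced rank-$3$ possibility containing two $\rt A_2$'s sharing a root) reduces everything to the finite check that the six positive roots of $\rt A_3$ with their four $\rt A_2$-triples form $\cal P_2^\vee$. The extra care you take with the reducible rank-$3$ cases and the intrinsic definition of a line on $\rt R_+$ is sound but does not change the substance of the argument.
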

	\begin{proof}
		Suppose that $\rt R$ is a simply-laced root system inside a Euclidean vector space $V$.
		We first observe that $\Fspace$ as defined is a partial triple system.
		Indeed, if two lines $\ell_1,\ell_2$ intersect in two points $r,s$,
		then $r,s$ span a root system of type $\rt A_2$
		in a subspace $U\subseteq V$ of dimension $2$.
		Then $U\cap\rt R_+$ has size $3$,
		so the third point in both $\ell_1$ and $\ell_2$ is uniquely determined,
		so that $\ell_1=\ell_2$.
		Thus $\Fspace$ is a partial triple system.

		Now suppose that $\ell_1,\ell_2$ are two distinct intersecting lines,
		say $\ell_1 = \{r,s,t\}$ and $\ell_2 = \{r,u,v\}$.
		Therefore the vector space $U = \Span(\ell_1\cup\ell_2) \subseteq V$ is $3$-dimensional
		and, as $\rt A_3$ is the only irreducible simply-laced root system of rank $3$,
		$\ell_1\cup\ell_2$ span a root system of type~$\rt A_3$,
		which has $6$ positive roots and $4$ distinct subspaces $\rt A_2$.
		It is thus easy to check that $\la\ell_1\cup\ell_2\ra$ in $\Fspace$
		is isomorphic to $\cal P_2^\vee$ with $6$ points and $4$ lines.
	\end{proof}


	\begin{example}\label{ex:An}
			Let $G = \Sym(n)$ (with $n \geq 2$) and let $D$ be the conjugacy class of all transpositions in $G$.
			Then $(G,D)$ is a $3$-transposition group with Fischer space $\Fspace(\rt A_{n-1})$.
            Notice that this generalizes Example~\ref{ex:fisch}\ref{fisch:S4}.
	\end{example}

\section{Jordan algebras, Matsuo algebras and fusion rules}
\label{sec alg pre}
	From now on, we will always assume that $\FF$ is a commutative field
	of characteristic $\ch(\FF)$ not $2$.
	An {\em $\FF$-algebra} is a vector space over $\FF$ equipped with an $\FF$-bilinear multiplication.
	We do not require our algebras to be associative or unital,
	but all algebras in this text are commutative.

	We will be interested in two kinds of algebras: Jordan algebras and Matsuo algebras.
	\begin{definition}
		A {\em Jordan algebra} over $\FF$ is a commutative algebra
		$J$ over $\FF$ such that $(ab)(aa) = a(b(aa))$ for all $a,b \in J$.
	\end{definition}
	The theory of Jordan algebras is present in many different areas of mathematics, and we refer the reader
	to McCrimmon's book \cite{mccrimmon} for an excellent introduction to the subject.
	Associative algebras are an important source of Jordan algebras;
	we present some specific examples that we will need in the sequel.
	\begin{example}\label{ex jordan}
		\begin{compactenum}
			\item
				If $A$ is an associative $\FF$-algebra,
				then $A^+$ with the same underlying vector space
				and {\em Jordan product} $x\bullet y = \frac{1}{2}(xy+yx)$
				is a Jordan algebra over $\FF$ \cite{aaa}.
				If $A$ is unital, then $A^+$ is also unital, with the same unit.
			\item\label{ex jordan herm}
				If $A$ is an associative $\FF$-algebra with involution $*$, then
				the subspace $\mathcal{H}(A, *) = \{ x \in A \mid x^* = x \}$
				forms a Jordan subalgebra of $A^+$;
				it is called the Jordan algebra of {\em hermitian elements} of $A$ (with respect to $*$).
                If $*$ is $\FF$-linear, then $\mathcal{H}(A, *)$ is again a Jordan algebra over $\FF$.
                (In general, it is only a Jordan algebra over the subfield $\operatorname{Fix}_\FF(*)$.)
			\item\label{ex jordan zero-sum}
				Let $A$ be the associative $\FF$-algebra of $n\times n$ matrices whose rows and columns all sum to $0$,
				and let $t$ be the usual matrix transposition.
				Then by~\ref{ex jordan herm}, $\cal H(A,t)$ is a Jordan algebra, which we will refer to as
				the {\em Jordan algebra of symmetric zero-sum $n\times n$ matrices over $\FF$}.
				This algebra is unital if and only if $n \neq 0$ in $\FF$, in which case the identity element is
				the matrix with each diagonal entry equal to $(n-1)/n$ and each non-diagonal entry
				equal to $-1/n$.
		\end{compactenum}
	\end{example}

	Matsuo algebras are a much more recent object of study.
	They occured first in \cite{matsuo} in the context of $3$-transposition groups related to vertex operator algebras.

	\begin{definition}[\cite{matsuo}]
        \begin{compactenum}
            \item
                Let $\al\in\FF$ and $\Fspace$ a partial triple system.
                The {\em Matsuo algebra} $M_\al(\Fspace)$
                is the $\FF$-algebra with basis $\Fspace$,
                where the multiplication of two basis elements $x,y\in\Fspace$ is given by
                \begin{equation}
                    \label{eq matsuo mult}
                    xy = \begin{cases}
                        x & \text{ if } x = y \\
                        0 & \text{ if } x\not\sim y \\
                        \frac{\al}{2}(x+y-x\wedge y) & \text{ if } x\sim y.
                    \end{cases}
                \end{equation}
                We will view $\Fspace$ as embedded in $M_\al(\Fspace)$.
                Hence any $x\in\Fspace$ is an idempotent, that is, $xx=x$.
            \item
                Let $\al\in\FF$ and let $(G,D)$ be a $3$-transposition group.
                The {\em Matsuo algebra} $M_\al(G,D)$ is defined to be the Matsuo algebra $M_\al(\Fspace(G,D))$ of the Fischer space
                $\Fspace(G,D)$ as defined in Theorem~\ref{th:3tr-fisch}\ref{3tr-fisch:i}.
                In particular, $\dim_\FF M_\al(G,D) = \size{D}$.
        \end{compactenum}
	\end{definition}

	To avoid degeneracy, from now on we assume $\al\neq1,0$.
	\begin{lemma}
		\label{lem-eigv-matsuo}
		The eigenspaces of $x\in\Fspace$ in $M_\al(\Fspace)$ are
		\begin{align}
			& \la x\ra, \text{ its }1\text{-eigenspace,} \\
			& \la y+x\wedge y - \al x\mid y\sim x\ra\oplus\la y \mid y\not\sim x\ra, \text{ its }0\text{-eigenspace, and} \\
			& \la y-x\wedge y\mid y\sim x\ra, \text{ its }\al\text{-eigenspace.}
		\end{align}
		The algebra $M_\al(\Fspace)$ decomposes as a direct sum of these eigenspaces for any $x\in\Fspace$.
	\end{lemma}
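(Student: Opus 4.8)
The plan is a direct computation with the multiplication rule~\eqref{eq matsuo mult}, organised around the partition $\Fspace = \{x\} \cup x^\sim \cup x^{\not\sim}$. Since $M_\al(\Fspace)$ is commutative, it suffices to diagonalise the operator $v \mapsto xv$. Two cases are immediate: $xx = x$ shows that $x$ itself is a $1$-eigenvector, and $xy = 0$ for every $y \in x^{\not\sim}$ shows that each such basis vector is a $0$-eigenvector.

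The substantive case is $y \in x^\sim$. Here I would first record the structure of the lines through $x$: if $\ell$ is the unique line containing $x$ and $y$ and $z := x \wedge y$ is the third point, then $z$ is again collinear with $x$ via $\ell$, and $x \wedge z = y$; thus the lines through $x$ partition $x^\sim$ into the pairs $\{y, x\wedge y\}$. On such a pair, \eqref{eq matsuo mult} gives $xy = \frac{\al}{2}(x + y - z)$ and $xz = \frac{\al}{2}(x + z - y)$, from which one computes $x(y - z) = \al(y - z)$ and $x(y + z - \al x) = \al x - \al x = 0$. Hence $y - x\wedge y$ is an $\al$-eigenvector and $y + x\wedge y - \al x$ is a $0$-eigenvector, exactly the spanning vectors listed in the statement; letting $y$ range over $x^\sim$ (equivalently, over a set of representatives for the lines through $x$) produces all of them.

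It then remains to check that the three displayed subspaces exhaust the algebra. If there are $m$ lines through $x$, then $\size{x^\sim} = 2m$, and I would verify linear independence by the usual trick of inspecting the coefficient of a basis vector occurring in only one summand: the $\al$-eigenvectors $y - x\wedge y$ (one per line) involve pairwise disjoint pairs of basis vectors, and the $0$-eigenvectors $y + x\wedge y - \al x$ together with the basis vectors $\{y : y \not\sim x\}$ are independent for the same reason. This exhibits subspaces of dimensions $1$, $m$, and $m + \size{x^{\not\sim}}$, summing to $1 + 2m + \size{x^{\not\sim}} = \size{\Fspace} = \dim_\FF M_\al(\Fspace)$. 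Since $\al \neq 0, 1$, eigenvectors belonging to the distinct eigenvalues $1, 0, \al$ are linearly independent across eigenvalues, so the sum of the three subspaces is direct; as their dimensions already account for all of $M_\al(\Fspace)$, the sum is the whole algebra, each subspace is the full eigenspace of $v \mapsto xv$, and $x$ has no other eigenvalues. There is no serious obstacle here; the only point needing care is the bookkeeping — in particular the observation that $x^\sim$ is the disjoint union of the line-pairs through $x$, which is what makes both the independence argument and the dimension count go through.
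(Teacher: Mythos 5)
Your proposal is correct and follows essentially the same route as the paper: both diagonalise $\ad(x)$ by partitioning $x^\sim$ into the pairs $\{y, x\wedge y\}$ along the lines through $x$, verify the two displayed computations $x(y-x\wedge y)=\al(y-x\wedge y)$ and $x(y+x\wedge y-\al x)=0$, and conclude by matching the resulting independent eigenvectors against the basis $\Fspace$. Your version merely spells out the dimension count and the disjointness of the line-pairs a little more explicitly than the paper does.
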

	\begin{proof}
		The points of $\Fspace$ form a basis for $A = M_\al(\Fspace)$.
		Evidently $x$ is a $1$-eigenvector,
		and $x^{\not\sim}$ is a set of $0$-eigenvectors.
		Now partition $x^\sim$ into sets $\{y,x\wedge y\}$ for $y\sim x$.
		Then the subspace $\la x,y,x\wedge y\ra$ of $A$
		is spanned by $x,y-x\wedge y$ and $y+x\wedge y - \al x$,
		and these are $1,\al,0$-eigenvectors of $x$ respectively:
		$xx = x$, and
		\begin{align}
			x(y-x\wedge y) & = \frac{\al}{2}(x+y-x\wedge y-x-x\wedge y+y) = \al(y-x\wedge y), \\
			x(y+x\wedge y - \al x) & = \frac{\al}{2}(x+y-x\wedge y +x+x\wedge y-y) - \al x
			= \al x - \al x = 0.
		\end{align}
		Thus every pair $\{y,x\wedge y\}$ gives a pair of $0,\al$-eigenvectors of $x$,
		and so we have a bijection between a basis of $A$
		and a collection of linearly independent $1,0,\al$-eigenvectors of $x$.
	\end{proof}

    We now turn to the specific case of Fischer spaces.
	\begin{lemma}[\cite{hrs}*{Theorem 5.3, Proposition 5.4}]
		\label{lem inj miyamoto}
		Let $\Fspace$ be a nondegenerate Fischer space, and let $M_\alpha(\Fspace)$ be the corresponding $\FF$-Matsuo algebra.
		For each $x \in \Fspace$, let $\tau(x)$ be the $\FF$-linear automorphism of $M_\al(\Fspace)$
		acting on the eigenspaces of $x$ by
		\begin{equation}\label{eq:miyamoto}
			y^{\tau(x)} = \begin{cases}
				y & \text{ if } xy = 0 \text{ or } xy = y, \\
				-y & \text{ if } xy = \al y.
			\end{cases}
		\end{equation}
		Then each $\tau(x)$ is an involution,
		$\size{\tau(x)\tau(y)}\leq3$ for any $x,y\in\Fspace$,
		and the map
		\[ \Fspace\to\Aut(M_\al(\Fspace)),\quad x\mapsto\tau(x) \quad \]
		is an injection.
		\qed
	\end{lemma}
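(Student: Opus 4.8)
The plan is to recognise the linear map $\tau(x)$ defined by~\eqref{eq:miyamoto} as the linear extension to $M_\al(\Fspace)$ of the combinatorial point reflection of $\Fspace$ appearing in Theorem~\ref{th:3tr-fisch}\ref{3tr-fisch:ii}, and then to read off all three assertions from that theorem. First I would note that $\tau(x)$ is well defined and an involution of the underlying vector space: by Lemma~\ref{lem-eigv-matsuo}, $M_\al(\Fspace)$ is the direct sum of the $1$-, $0$- and $\al$-eigenspaces of $x$, and~\eqref{eq:miyamoto} simply prescribes the value $+1$ on the first two summands and $-1$ on the third, so $\tau(x)^2=\id$.

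Next I would determine the action of $\tau(x)$ on the basis $\Fspace$. By the eigenvector descriptions in Lemma~\ref{lem-eigv-matsuo}, $\tau(x)$ fixes $x$ (in the $1$-eigenspace) and fixes every $y$ with $y\not\sim x$ (in the $0$-eigenspace). For $y\sim x$ one writes $y=\tfrac12(y-x\wedge y)+\tfrac12(y+x\wedge y-\al x)+\tfrac{\al}{2}x$, a decomposition into an $\al$-, a $0$- and a $1$-eigenvector of $x$; negating the first summand gives $\tau(x)(y)=x\wedge y$, and symmetrically $\tau(x)(x\wedge y)=y$ (because $x\wedge(x\wedge y)=y$). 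Thus, as a permutation of the basis $\Fspace$, $\tau(x)$ fixes $x\cup x^{\not\sim}$ and interchanges the two points $\neq x$ on every line through $x$; that is, its restriction to the basis $\Fspace$ is exactly the point reflection $\tau(x)$ of Theorem~\ref{th:3tr-fisch}\ref{3tr-fisch:ii}, which lies in $\Aut(\Fspace)$ and---since $\Fspace$ is nondegenerate, so $x$ lies on at least one line---has order exactly $2$.

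It then suffices to observe that the linear extension to $M_\al(\Fspace)$ of any $\sigma\in\Aut(\Fspace)$ is an $\FF$-algebra automorphism: the structure constants in the multiplication rule~\eqref{eq matsuo mult} are expressed purely through equality, the collinearity relation $\sim$ and the operation $\wedge$, all preserved by $\sigma$, so $\sigma(uv)=\sigma(u)\sigma(v)$ for basis elements $u,v$ and hence everywhere. Consequently $\tau(x)\in\Aut(M_\al(\Fspace))$, and the map $\Fspace\to\Aut(M_\al(\Fspace))$, $x\mapsto\tau(x)$, is the composition of the map $\tau\colon\Fspace\to D$ of Theorem~\ref{th:3tr-fisch}\ref{3tr-fisch:ii} with the group homomorphism $\langle D\rangle\leq\Sym(\Fspace)\to\Aut(M_\al(\Fspace))$ given by linear extension of a permutation---a homomorphism that is injective because $\Fspace$ is a basis of $M_\al(\Fspace)$. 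Now Theorem~\ref{th:3tr-fisch}\ref{3tr-fisch:ii} says $(\langle D\rangle,D)$ is a $3$-transposition group, so $\size{\tau(x)\tau(y)}\leq3$ in $\langle D\rangle$ and hence in $\Aut(M_\al(\Fspace))$; and it says $\Fspace(\langle D\rangle,D)\cong\Fspace$, so $\tau\colon\Fspace\to D$ is a bijection (one may also argue injectivity directly: if $x\neq y$ and $x\sim y$ then $\tau(y)$ fixes $y$ while $\tau(x)$ moves it, and the case $x\not\sim y$ is excluded by testing $\tau(x)$ and $\tau(y)$ on a line through $y$). Composing with the injective linear extension gives the injectivity of $x\mapsto\tau(x)\in\Aut(M_\al(\Fspace))$.

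The conceptual crux is precisely this reduction to Theorem~\ref{th:3tr-fisch}\ref{3tr-fisch:ii}, which makes the $3$-transposition bound and the injectivity automatic; the only genuinely computational point is the eigenvector bookkeeping identifying $\tau(x)$ on the basis, and that merely re-uses the calculation already performed in the proof of Lemma~\ref{lem-eigv-matsuo}.
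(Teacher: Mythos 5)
Your proof is correct. Note first that the paper itself gives no argument here: Lemma~\ref{lem inj miyamoto} is imported wholesale from \cite{hrs} (Theorem 5.3, Proposition 5.4) with an immediate \qed, so there is nothing internal to compare against. Your self-contained derivation is the natural one and all its steps check out: the decomposition $y=\tfrac12(y-x\wedge y)+\tfrac12(y+x\wedge y-\al x)+\tfrac{\al}{2}x$ into $\al$-, $0$- and $1$-eigenvectors correctly yields $y^{\tau(x)}=x\wedge y$, identifying $\tau(x)$ on the basis with the point reflection of Theorem~\ref{th:3tr-fisch}\ref{3tr-fisch:ii}; the observation that the structure constants in \eqref{eq matsuo mult} depend only on $=$, $\sim$ and $\wedge$ shows that linear extension of any $\sg\in\Aut(\Fspace)$ is an algebra automorphism and that $\Sym(\Fspace)\to\GL(M_\al(\Fspace))$ is an injective group homomorphism, so the order bound $\size{\tau(x)\tau(y)}\leq 3$ and the injectivity both transfer from the combinatorial side. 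Your direct injectivity argument in the case $x\not\sim y$ is stated tersely but is sound: if $\tau(x)$ and $\tau(y)$ agreed, then for a line $\{y,z,y\wedge z\}$ (which exists by nondegeneracy) one would get a second line $\{x,z,y\wedge z\}$ meeting it in two points, contradicting the partial linear space axiom. The only thing worth flagging is that your reduction does lean on Theorem~\ref{th:3tr-fisch}\ref{3tr-fisch:ii} for the fact that $D\subseteq\Aut(\Fspace)$ and that $(\la D\ra,D)$ is a $3$-transposition group --- which is itself a cited result (Buekenhout/Aschbacher) rather than something you prove --- but that is exactly the dependency structure the paper intends, and is legitimate.
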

	The automorphism $\tau(x)$ is known as a {\em Miyamoto involution};
	restricted to the points $\Fspace\subseteq M_\al(\Fspace)$,
	these $\tau(x)$ are the same as the involutions occuring in the statement of Theorem~\ref{th:3tr-fisch}\ref{3tr-fisch:ii}.
	In particular, if we set $D_\Fspace = \{\tau(x)\mid x\in\Fspace\}$ and $G_\Fspace = \la D_\Fspace\ra$,
	then $(G_\Fspace,D_\Fspace)$ is a $3$-transposition group, and
    Lemma~\ref{lem inj miyamoto} realizes the group $G_{\Fspace}$ as a subgroup of the automorphism group of the algebra $M_\al(\Fspace)$.

    Conversely, if we start from a $3$-transposition group $(G,D)$, then
    the elements of $G$ act $\FF$-linearly on the corresponding Matsuo algebra $A = M_\alpha(G,D)$ by conjugating the elements of the basis $\Fspace = D$.
    Recall from Theorem~\ref{th:3tr-fisch} that this action is not faithful in general, and its kernel is precisely the center $\Cent(G)$.
    We thus get a faithful linear representation of $G/\Cent(G)$ as a subgroup of the automorphism group of~$A$.

	\medskip
	We will now define what it means for a decomposition of an algebra (w.r.t.\@ an idempotent) to satisfy certain fusion rules.
	\begin{definition}\label{def:alg}
        Let $A$ be an $\FF$-algebra.
		\begin{compactenum}
		    \item
			{\em Fusion rules} are a pair $(\Phi,\star)$,
			consisting of a set $\Phi\subseteq \FF$ of {\em eigenvalues} lying in the field $\FF$
			and a mapping $\star\colon\Phi\times\Phi\to2^\Phi$.
			We also use $\Phi$ to refer to $(\Phi,\star)$.
		    \item
			For $x\in A$,
			we call the eigenvalues, eigenvectors and eigenspaces
			of the adjoint map $\ad(x)\in\End(A)$
			the eigenvalues, eigenvectors and eigenspaces of $x$, respectively.
			The $\al$-eigenspace of $x$ in $A$ is denoted $A^x_\al = \{a\in A\mid xa = \al a\}$.
			By extension, if $\Psi\subseteq \FF$ is a set,
			we write $A^x_\Psi = \bigoplus_{\phi\in\Psi}A^x_\phi$,
			and $A^x_\emptyset = \{ 0 \}$.
		    \item
			An idempotent $e$ in an algebra $A$ is a {\em $\Phi$-axis}
			if $\ad(e)$ is diagonalisable, takes all its eigenvalues in $\Phi$,
			so that $A$ decomposes into the direct sum of $\ad(e)$-eigenspaces
			\begin{equation}
				A = \bigoplus_{\phi\in\Phi} A^e_\phi,
			\end{equation}
			and the multiplication of eigenvectors {\em satisfies the fusion rules $\Phi$}, \ie
			\begin{equation}
				A^e_\phi A^e_\psi\subseteq A^e_{\phi\star\psi}.
			\end{equation}
			In other words,
			$xy\in A^e_{\phi\star\psi} = \bigoplus_{\chi\in\phi\star\psi} A^e_\chi$
			for all $x\in A^e_\phi$ and $y\in A^e_\psi$.
		    \item\label{alg:axial}
			The algebra $A$ is called {\em $\Phi$-axial} (or simply {\em axial}) if it is generated (as an algebra) by $\Phi$-axes.
		\end{compactenum}
	\end{definition}

    \begin{example}
            Let $\al \in \FF \setminus \{ 0,1 \}$, and let $\Phi = \{ 1,0,\al \}$.
            Let $\star$ be the symmetric map $\star \colon \Phi \times \Phi \to 2^\Phi$ as given by Table~\ref{tbl-jordan}.
            These fusion rules $(\Phi, \star)$ are denoted by $\Phi(\al)$ and are called {\em Jordan fusion rules}
            because the eigenspaces of an idempotent in a Jordan algebra
            multiply according to these fusion rules for $\al=\nicefrac{1}{2}$; see Lemma~\ref{lem jordan axis} below.
    \end{example}

    It turns out that both Matsuo algebras and Jordan algebras satisfy fusion rules $\Phi(\al)$.
    \begin{theorem}[\cite{hrs}*{Theorem 6.4}]
        Let $A = M_\alpha(\Fspace)$ be a Matsuo algebra.
        Then every element of the basis $\Fspace$ is a $\Phi(\al)$-axis.
        In particular, $A$ is a $\Phi(\al)$-axial algebra.
        \qed
    \end{theorem}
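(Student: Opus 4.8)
The plan is to use Lemma~\ref{lem-eigv-matsuo}, which already describes the eigenspace decomposition of any basis element $x \in \Fspace$ in $A = M_\al(\Fspace)$ with eigenvalues $1$, $0$, $\al$. What remains is to verify that products of eigenvectors land in the subspaces prescribed by Table~\ref{tbl-jordan}, i.e. that
$A^x_1 A^x_1 \subseteq A^x_1$,
$A^x_1 A^x_0 = \{0\}$,
$A^x_1 A^x_\al \subseteq A^x_\al$,
$A^x_0 A^x_0 \subseteq A^x_0$,
$A^x_0 A^x_\al \subseteq A^x_\al$, and
$A^x_\al A^x_\al \subseteq A^x_1 \oplus A^x_0$.
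The first three are immediate: $A^x_1 = \la x\ra$ and $x$ is an idempotent acting on $A$ as $\ad(x)$, so $x \cdot v = \phi v$ for $v$ in the $\phi$-eigenspace already gives the rows indexed by $1$.

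The substantive work is the three relations among the $0$- and $\al$-eigenspaces, and here I would reduce everything to a computation inside the subalgebra generated by $x$ and a few other points. Using the explicit spanning sets from Lemma~\ref{lem-eigv-matsuo} — namely $y - x\wedge y$ (for $y \sim x$) spanning $A^x_\al$, and the vectors $y + x\wedge y - \al x$ (for $y \sim x$) together with $z$ (for $z \not\sim x$) spanning $A^x_0$ — it suffices to check the fusion rules on pairs of such spanning vectors. For a fixed $x$ and two collinear points $y, x\wedge y$, or $z \not\sim x$, the relevant products only involve the points $x, y, x\wedge y, z$ and the various $\wedge$-combinations among them; by Definition~\ref{def-fischer-space} the subspace they generate is a copy of $\cal P_2^\vee$ or $\cal P_3$, so there are only finitely many configurations of mutual collinearity to inspect. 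One tabulates, in each configuration, the product of the two chosen eigenvectors using \eqref{eq matsuo mult} and checks it expands in the claimed eigenspace; this is exactly the kind of bookkeeping carried out for the $\al$-eigenvalue case in the proof of Lemma~\ref{lem-eigv-matsuo}, extended to the remaining table entries.

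The main obstacle — really the only place where something could go wrong — is the bottom-right entry $A^x_\al A^x_\al \subseteq A^x_1 \oplus A^x_0$: one must show the product of two $\al$-eigenvectors has \emph{no} component in the $\al$-eigenspace. Concretely, for $y, y' \sim x$ one computes $(y - x\wedge y)(y' - x\wedge y')$ and must verify that, after expanding via \eqref{eq matsuo mult}, all surviving terms lie in $\la x\ra \oplus A^x_0$. The key cancellation is that the $\al$-graded pieces produced by multiplying $y$-type and $(x\wedge y)$-type basis vectors pair off with opposite signs; this is precisely where the $\ZZ/2$-grading structure of $\Phi(\al)$ (the $\al$-eigenspace being the $-1$-eigenspace of the Miyamoto involution $\tau(x)$ of Lemma~\ref{lem inj miyamoto}) is doing the work. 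In fact, once one knows $\tau(x) \in \Aut(A)$, the $\ZZ/2$-grading is automatic: the fixed subspace $A^x_1 \oplus A^x_0$ and the anti-fixed subspace $A^x_\al$ of an order-$2$ automorphism always satisfy $(+)(+)\subseteq(+)$, $(+)(-)\subseteq(-)$, $(-)(-)\subseteq(+)$, which gives every row of Table~\ref{tbl-jordan} except the finer splitting within the $(+)$-part — and that finer splitting ($A^x_1 A^x_0 = 0$, $A^x_0 A^x_0 \subseteq A^x_0$) is again a short direct check on the spanning vectors above. Finally, since $\Fspace$ is a basis of $A$, every basis element being a $\Phi(\al)$-axis means $A$ is generated by $\Phi(\al)$-axes, hence $\Phi(\al)$-axial, as claimed.
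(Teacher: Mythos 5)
The paper offers no proof of this statement: it is imported verbatim from \cite{hrs}*{Theorem 6.4} with no argument given, so there is no in-paper proof to measure yours against; what you have written is a self-contained reconstruction, and its overall strategy is sound. Two points deserve emphasis. First, the hypothesis that $\Fspace$ is a Fischer space (and not merely a partial triple system, for which $M_\al(\Fspace)$ is equally well defined in this paper) is exactly what makes your reduction work: every product of two spanning eigenvectors from Lemma~\ref{lem-eigv-matsuo} takes place inside the subspace generated by at most two intersecting lines, hence inside a copy of $\cal P_2^\vee$ or $\cal P_3$ by Definition~\ref{def-fischer-space}, and the combinatorial facts you need (e.g.\ that $z,z'\not\sim x$ together with $z\sim z'$ forces $z\wedge z'\not\sim x$, so that $A^x_0A^x_0$ picks up no $\al$-component) are read off in those two finite geometries; for a general partial triple system the fusion rules genuinely fail, so this hypothesis cannot be elided. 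Second, your shortcut via the $\ZZ/2$-grading is legitimate only if $\tau(x)\in\Aut(A)$ is established without appeal to the fusion rules --- in \cite{hrs} the automorphism property of a Miyamoto involution is normally a \emph{consequence} of the grading, so simply quoting Lemma~\ref{lem inj miyamoto} here courts circularity. The non-circular justification, special to Matsuo algebras, is that $\tau(x)$ restricted to the basis $\Fspace$ is the point permutation of Theorem~\ref{th:3tr-fisch}\ref{3tr-fisch:ii}, an automorphism of the geometry preserving $\sim$ and $\wedge$, and any such permutation preserves the multiplication \eqref{eq matsuo mult}; its linear extension agrees with \eqref{eq:miyamoto} on the eigenspaces of Lemma~\ref{lem-eigv-matsuo}. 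Granting that, the grading yields $A^x_\al A^x_\al\subseteq A^x_{\{1,0\}}$ and $A^x_{\{1,0\}}A^x_\al\subseteq A^x_\al$ for free, and, as you correctly isolate, the only remaining nontrivial entry of Table~\ref{tbl-jordan} is $A^x_0A^x_0\subseteq A^x_0$ (no component on $x$), which does require the finite configuration check you describe.
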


	\begin{lemma}[\cite{jac}*{Chapter III, Lemma 1.1, p.\@ 119}]
		\label{lem jordan axis}
		Let $J$ be a Jordan algebra.
		Then every idempotent $e \in J$ is a $\Phi(\nicefrac{1}{2})$-axis;
        this is known as the {\em Peirce decomposition} of $J$ with respect to $e$.
        The corresponding involutions $\tau(e) \in \Aut(J)$ as defined in~\eqref{eq:miyamoto} are known as {\em Peirce reflections};
        see~\cite{mccrimmon}*{Exercise 8.1.3, p.\@ 238}.
		\qed
	\end{lemma}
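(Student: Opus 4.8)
The statement is the classical Peirce decomposition; I sketch how one proves it from scratch. Write $E := \ad(e) = L_e$ for the operator ``multiply by $e$'' (which coincides with right multiplication since $J$ is commutative). The plan is to work entirely with $E$, feeding in the Jordan identity $L_{a^2}L_a = L_aL_{a^2}$ only through its linearizations. Polarizing once (substitute $a \mapsto a + \lambda b$ and read off the coefficient of $\lambda$) gives
\begin{equation}
2L_{ab}L_a + L_{a^2}L_b = 2L_aL_{ab} + L_bL_{a^2}, \tag{$\ast$}
\end{equation}
and polarizing $(\ast)$ once more yields the fully linearized identity
\begin{equation}
L_{ab}L_c + L_{bc}L_a + L_{ca}L_b = L_aL_{bc} + L_bL_{ca} + L_cL_{ab}; \tag{$\ast\ast$}
\end{equation}
both hold in any Jordan algebra over $\FF$ (recall $\ch(\FF) \neq 2$).

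\emph{Step 1 (minimal polynomial of $E$).} The crucial point is that at an idempotent the quadratic term $L_{a^2}$ degenerates. Setting $a = e$ in $(\ast)$ and using $e^2 = e$ turns it into the commutator identity $2\,[L_{eb}, E] = [L_b, E]$ for all $b \in J$, where $[X,Y] := XY - YX$. I would now apply this operator identity to the element $e$: using $E(e) = e$ and commutativity (so $(eb)e = e(eb) = E^2 b$) it collapses to $2E^3 b = 3E^2 b - Eb$, i.e.\ $(2E^3 - 3E^2 + E)(b) = 0$ for every $b$. Since $2t^3 - 3t^2 + t = t\,(2t-1)(t-1)$ has the three distinct roots $1,\,0,\,\nicefrac{1}{2}$ (this uses $\ch(\FF) \neq 2$), $E$ is diagonalizable with spectrum contained in $\{1,0,\nicefrac{1}{2}\}$. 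Hence $J = J_1 \oplus J_0 \oplus J_{1/2}$, where $J_\lambda := \{b \in J : eb = \lambda b\}$, which is the eigenspace decomposition required of a $\Phi(\nicefrac{1}{2})$-axis.

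\emph{Step 2 (fusion rules).} If $b \in J_1$ then $L_{eb} = L_b$, and if $b \in J_0$ then $L_{eb} = 0$; in both cases the commutator identity of Step~1 forces $[L_b, E] = 0$. So $L_b$ preserves every eigenspace $J_\lambda$ whenever $b \in J_1 \cup J_0$, and feeding an eigenvector into $L_eL_b = L_bL_e$ at once gives $J_1J_1 \subseteq J_1$, $J_0J_0 \subseteq J_0$, $J_1 J_{1/2} \subseteq J_{1/2}$ and $J_0 J_{1/2} \subseteq J_{1/2}$; moreover, evaluating $e(ab)$ for $a \in J_1$, $b \in J_0$ in the two available ways (via $[L_a,E]=0$ and via $[L_b,E]=0$) puts $ab$ in $J_1 \cap J_0 = 0$, so $J_1 J_0 = 0$. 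For the last rule $J_{1/2}J_{1/2} \subseteq J_1 \oplus J_0$ I would put $a = e$ in $(\ast\ast)$ and take $b,c \in J_{1/2}$: then $L_{eb} = \tfrac{1}{2}L_b$ and $L_{ce} = \tfrac{1}{2}L_c$, the terms $\tfrac{1}{2}L_bL_c$ and $\tfrac{1}{2}L_cL_b$ cancel from the two sides, and what survives is $[L_{bc}, E] = 0$; applying this to $e$ gives $e(bc) = e(e(bc))$, i.e.\ $(E^2 - E)(bc) = 0$, hence $bc \in \ker(E^2-E) = J_0 \oplus J_1$. This establishes that $e$ is a $\Phi(\nicefrac{1}{2})$-axis. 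Finally, these relations say exactly that $J = (J_1 \oplus J_0) \oplus J_{1/2}$ is a $\ZZ/2$-grading, so the map that is $+1$ on $J_1 \oplus J_0$ and $-1$ on $J_{1/2}$ --- which is precisely the involution $\tau(e)$ of~\eqref{eq:miyamoto} --- is an algebra automorphism, justifying the final sentence of the statement.

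\emph{Where the work is.} There is no real obstacle: once $(\ast)$ and $(\ast\ast)$ are in hand the whole argument is formal. The only things needing care are the bookkeeping in the two polarizations (the signs and the $\tfrac{1}{2}$'s) and the small but pivotal observation that the idempotency relation $e^2 = e$ is what makes $(\ast)$ collapse to a commutator identity --- that single input drives both the cubic annihilating polynomial of $E$ in Step~1 and the $J_{1/2}J_{1/2}$-rule in Step~2. For the standard references see \cite{jac} and \cite{mccrimmon}.
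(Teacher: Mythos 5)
Your proof is correct: the linearizations $(\ast)$ and $(\ast\ast)$ are derived accurately, the specialization $a=e$ applied to the element $e$ does yield the annihilating polynomial $t(2t-1)(t-1)$ of $\ad(e)$, and the case analysis for the fusion rules (including the cancellation giving $[L_{bc},\ad(e)]=0$ for $b,c\in J_{1/2}$ and the resulting $\ZZ/2$-grading) is the standard argument. The paper does not prove this lemma at all --- it cites Jacobson (Chapter III, Lemma 1.1) and McCrimmon --- and your argument is essentially the classical proof found there, so there is nothing to reconcile.
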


	It is clear that not every Jordan algebra is axial (for instance, a Jordan division algebra has no non-trivial idempotents).
    The following results, together with Lemma~\ref{lem jordan axis},
	give a largely satisfactory answer
	to the question of which Jordan algebras are $\Phi(\nicefrac{1}{2})$-axial.
	Recall that $x\in A$ is {\em nilpotent}
	if there exists some integer $n$ such that $x^n = 0$,
	and that an ideal $I$ is {\em solvable}
	if there is an integer $k \geq 0$ such that $I^{2^k} = 0$,
	where $I^{2^k}$ is defined inductively by $I^{1} = I$, $I^{2} = \{ \sum_i a_i b_i \mid a_i,b_i \in I \}$ and $I^{2^k} = (I^{2^{k-1}})^2$.
    (See also \cite{jac}*{Chapter V, Section 2}.)
	\begin{theorem}
		\label{thm jordan idempots}
		Suppose that $J$ is a finite-dimensional Jordan algebra over $\FF$.
		\begin{compactenum}
			\item {\em (\cite{aaa}*{Lemma 4})}
				If $a\in J$ is not nilpotent,
				then $\FF[a]\subseteq J$ contains a nonzero idempotent.
			\item {\em (\cite{aaa}*{Theorem 5})}
				There is a unique largest solvable ideal $\Rad(J)$ of $J$,
				called its {\em radical}.
				All elements of $\Rad(J)$ are nilpotent,
				and $J/\Rad(J)$ is a semisimple Jordan algebra.
			\item {\em (\cite{jac}*{Chapter VIII, Section 3, Lemma 2})}
				If $J$ is semisimple and $\FF$ is algebraically closed,
				then $J$ is spanned by idempotents.
				\qed
		\end{compactenum}
	\end{theorem}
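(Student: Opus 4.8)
I would handle the three parts separately; only (i) admits a short self\nobreakdash-contained proof, and for (ii) and (iii) I would reduce to standard structure theory and quote it. For (i): since Jordan algebras are power\nobreakdash-associative, the subalgebra $\FF[a]\subseteq J$ generated by $a$ is a finite\nobreakdash-dimensional \emph{commutative associative} $\FF$\nobreakdash-algebra. I would look at the descending chain of subspaces $\FF[a]\supseteq a\FF[a]\supseteq a^2\FF[a]\supseteq\cdots$; by finite\nobreakdash-dimensionality it stabilises, so $a^n\FF[a]=a^{2n}\FF[a]$ for some $n\geq1$, whence $a^n=a^{2n}g(a)$ for some polynomial $g$. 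Put $e=a^ng(a)\in\FF[a]$. Then $e^2=a^{2n}g(a)^2=(a^{2n}g(a))g(a)=a^ng(a)=e$, so $e$ is an idempotent; and if $e=0$ then $a^n=a^{2n}g(a)=a^n e=0$, contradicting that $a$ is not nilpotent. Hence $\FF[a]$ contains the nonzero idempotent $e$.

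For (ii): the key input is the extension property of solvability, valid in any nonassociative algebra --- if $I$ is an ideal of $A$ with $I$ and $A/I$ both solvable, then $A$ is solvable, because $A^{2^s}\subseteq I$ for some $s$ (as $A/I$ is solvable) and $I^{2^r}=0$ for some $r$, so $A^{2^{s+r}}=(A^{2^s})^{2^r}\subseteq I^{2^r}=0$, using the monotonicity $B\subseteq C\Rightarrow B^{2^k}\subseteq C^{2^k}$. Combined with the isomorphism $(I+K)/K\cong I/(I\cap K)$, this shows that the sum of two solvable ideals is again a solvable ideal; since $J$ is finite\nobreakdash-dimensional, the sum $\Rad(J)$ of \emph{all} solvable ideals is attained by a finite subsum, is therefore itself a solvable ideal, and is the unique largest one. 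That every element of $\Rad(J)$ is nilpotent I would deduce from Albert's theorem that a finite\nobreakdash-dimensional solvable Jordan algebra over a field of characteristic $\neq2$ is nilpotent. Finally, a nonzero solvable ideal of $J/\Rad(J)$ would pull back to a solvable ideal of $J$ properly containing $\Rad(J)$, which is impossible by maximality; hence $J/\Rad(J)$ has no nonzero solvable ideal, \ie is semisimple.

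For (iii): I would use the structure theorem that a finite\nobreakdash-dimensional semisimple Jordan algebra is a direct sum of simple ideals, reduce to $J$ simple, and then invoke the classification of simple finite\nobreakdash-dimensional Jordan algebras over an algebraically closed field $\FF$ with $\ch(\FF)\neq2$: each is a Jordan matrix algebra $\mathcal{H}_n(C)$ over a split composition $\FF$\nobreakdash-algebra $C$, or a spin factor $\FF1\oplus V$ of a nondegenerate symmetric bilinear form $b$ on $V$. In the matrix case the idempotents $E_{ii}$ and $\tfrac12(E_{ii}+E_{jj}+E_{ij}+E_{ji})$, together with their $C$\nobreakdash-coefficient analogues (using $\FF=\overline{\FF}$ to rescale norms to $1$), span $\mathcal{H}_n(C)$; in the spin\nobreakdash-factor case, if $v\in V$ satisfies $b(v,v)=1$ then $\tfrac12(1\pm v)$ are idempotents whose sum is $1$ and whose difference is $v$, so running $v$ over an orthogonal basis of $V$ shows that idempotents span $\FF1\oplus V$. (Alternatively one can bypass the explicit list via Peirce coordinatization.) The real obstacle is that (ii) and (iii) are genuinely non\nobreakdash-elementary --- they rest on Albert's solvable\nobreakdash-implies\nobreakdash-nilpotent theorem and on the classification and coordinatization theory of Jordan algebras --- so in practice I would prove (i) directly and cite \cite{aaa} and \cite{jac} for (ii) and (iii), exactly as the statement does.
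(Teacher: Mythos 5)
The paper gives no proof of this theorem at all: each part is stated with a citation to Albert or Jacobson and the statement is closed immediately, so your proposal supplies strictly more than the paper does, and what you supply is essentially correct. A few remarks. In (i) the argument is right up to an index shift: since $J$ need not be unital, $\FF[a]$ is the span of $a,a^2,\dots$, so $a^n\FF[a]$ is the span of $a^{n+1},a^{n+2},\dots$ and does not literally contain $a^n$; one should instead pick $n$ with $a^{n-1}\FF[a]=a^{2n}\FF[a]$, after which $a^n=a^{2n}g(a)$ and your computation of $e=a^ng(a)$ goes through verbatim. In (ii) the appeal to Albert's solvable-implies-nilpotent theorem is unnecessary for the nilpotence claim: with the paper's definition $I^{2^k}=(I^{2^{k-1}})^2$, power-associativity gives $x^{2^k}=(\cdots(x^2)^2\cdots)^2\in I^{2^k}=0$ directly for any $x$ in a solvable ideal; the rest of your argument (sum of solvable ideals is solvable via the extension property and the isomorphism theorem, maximality forces $J/\Rad(J)$ to have zero radical) is the standard one. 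In (iii) your route through the classification of simple Jordan algebras over an algebraically closed field is valid but heavier than what the cited lemma of Jacobson actually does, which is the alternative you mention in passing: take a complete system of orthogonal primitive idempotents, observe that over an algebraically closed field each diagonal Peirce component is $\FF e_i$ and each off-diagonal component $J_{ij}$ is spanned by elements $a$ with $a^2=e_i+e_j$, and note that $\tfrac12(e_i+e_j+a)$ is then an idempotent, so $a$ is a linear combination of idempotents; this bypasses the classification entirely. None of these points is a gap --- your plan to prove (i) directly and cite the literature for (ii) and (iii) matches what the paper does.
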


\section{Construction of Jordan algebras from root systems}
	\label{sec-sym}

    Our main goal in this section is to show that the Matsuo algebras arising from symmetric groups are indeed Jordan algebras;
    see Theorem~\ref{thm jordan matsuo an} below.
    Recall from Example~\ref{ex:An} that for each $n \geq 2$, the $3$-transposition group $(G,D)$ with $G = \Sym(n)$ and $D$ the conjugacy class of all
    transpositions, has corresponding Fischer space $\Fspace(G,D) = \Fspace(\rt A_{n-1})$.
	Our main tool will be the construction of Jordan algebras arising from root systems.

    \medskip

	So suppose that $\rt R$ is an arbitrary root system of rank $n$;
	recall that this implies that $\rt R$ spans $\RR^n$
	with Euclidean form $(\cdot,\cdot)$.
	Based on the description of the root systems with coefficients from $\tfrac{1}{2}\ZZ$ only (see, \eg \cite{carter}*{Appendix}),
	we can consider the root systems inside $V = \FF^{n+1}$, equipped with the standard inner product $(\cdot,\cdot)$;
	recall that we assume $\ch(\FF) \neq 2$.
	We also have to exclude $\ch(\FF)=3$ when $\rt R$ contains $\rt G_2$ as one of its irreducible components.

	For $v\in V$ with $vv^t \neq 0$, write $m_v$ for the projection matrix
	of the $1$-dimensional subspace $\la v\ra\subseteq V$,
	\ie $m_v = \frac{1}{vv^t} v^t v$.
	Let $J(\rt R)$ be the subspace spanned by the projection matrices $\{ m_a\mid a\in R \}$.
	Notice that $J(\rt R)$ is {\em not} closed under matrix multiplication in general.
	Also note that $m_a = m_{-a}$,
	so it suffices to consider the projection matrices
	for a set $\rt R_+$ of positive roots.

	\begin{example}\label{ex proj An}
		Let $V = \FF^{n}$ with standard ordered basis $v_1,\dotsc,v_{n}$.
		There is an embedding
		\begin{equation}
			\label{eq An in Rn+1}
			\rt (\rt A_{n-1})_+ = \{a_{ij} = v_i-v_j\mid 1\leq j<i\leq n \}\subseteq V,
		\end{equation}
		and its projection matrices are
		\begin{equation}
			\label{eq proj mat an}
			m_{a_{ij}} = \frac{1}{2}(e_{ii} - e_{ij} - e_{ji} + e_{jj}),
		\end{equation}
		where $e_{ij}$ is the $n \times n$ matrix
		whose entries are $0$ everywhere except in position $(i,j)$ where it has entry $1$.
	\end{example}

	\begin{lemma}
		\label{lem rootsys proj mult}
		Let $\rt R$ be an irreducible root system.
		Suppose that $a,b\in \rt R$ are two roots with $\lVert b \rVert \geq \lVert a \rVert$,
		choose $k\in\{1,2,3\}$ to satisfy $\lVert b \rVert = \sqrt{k} \, \lVert a \rVert$,
		and assume that $\ch(\FF) \neq 3$ when $k=3$ (i.e., when $\rt R = \rt G_2$).
		Let $\langle a,b \rangle$ denote the root system generated by $a$ and $b$.
		Then
		\begin{equation}
			m_a\bullet m_b = \begin{cases}
				m_a & \text{if } a = \pm b,
				\text{ \ie }\la a,b\ra\cong\rt A_1, \\
				0 & \text{if } (a,b)=0;\;
				\la a,b\ra\cong\rt A_1\times\rt A_1, \\
				\frac{1}{4}(m_a + k \cdot m_b - m_c)
				& \text{otherwise: } c\in \la a,b \ra \cap\{a\pm b\};\;
				\la a,b\ra\cong\rt A_2, B_2 \text{ or } G_2.
			\end{cases}
		\end{equation}
		In particular, $J(\rt R) = \langle m_a \mid a \in \rt R \rangle$ with the Jordan product $\bullet$ is a Jordan algebra.
	\end{lemma}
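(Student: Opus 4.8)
The plan is to compute $m_a\bullet m_b$ directly from the defining formula $m_v=\tfrac{1}{(v,v)}\,v^tv$ and to match the outcome against the three listed cases; the last sentence of the lemma then follows formally. First I would record the matrix identity $(a^ta)(b^tb)=a^t(ab^t)b=(a,b)\,a^tb$, which gives $m_am_b=\tfrac{(a,b)}{(a,a)(b,b)}\,a^tb$ and hence
\[
	m_a\bullet m_b=\tfrac12\bigl(m_am_b+m_bm_a\bigr)=\frac{(a,b)}{2(a,a)(b,b)}\,\bigl(a^tb+b^ta\bigr).
\]
When $a=\pm b$ this is simply $m_a$ (projection matrices are idempotent and $m_{-a}=m_a$), and when $(a,b)=0$ it is $0$; this settles the first two cases. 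In the remaining case $a\neq\pm b$ with $(a,b)\neq0$, the roots are non-proportional, so $\la a,b\ra$ is an irreducible root system of rank $2$, necessarily of type $\rt A_2$, $\rt B_2$ or $\rt G_2$. Replacing $b$ by $-b$ if necessary --- which affects neither $m_b$ nor $\la a,b\ra$ nor $k$ --- I may assume $(a,b)<0$, so that $c:=a+b$ is a root of $\la a,b\ra\subseteq\rt R$; the polarisation identity $a^tb+b^ta=(a+b)^t(a+b)-a^ta-b^tb$ then rewrites the right-hand side above in terms of $m_a$, $m_b$ and $m_c$.

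It remains to determine the scalar coefficients. Here I would invoke the standard constraint $\tfrac{4(a,b)^2}{(a,a)(b,b)}=4\cos^2\theta\in\{1,2,3\}$ on the angle $\theta$ between two non-proportional roots with $(a,b)\neq0$: since this number equals $k\bigl(\tfrac{2(a,b)}{(b,b)}\bigr)^2$ and $\tfrac{2(a,b)}{(b,b)}$ is a negative integer, it must equal $-1$, whence $(a,b)=-\tfrac12(b,b)=-\tfrac{k}{2}(a,a)$ and $(c,c)=(a,a)+2(a,b)+(b,b)=(a,a)$. Feeding these values back into the displayed formula yields $m_a\bullet m_b=\tfrac14\bigl(m_a+k\,m_b-m_c\bigr)$, as claimed. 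The computation divides by $(a,a)$ and by $(b,b)=k(a,a)$; since the squared root lengths in our realisation are small positive integers, this is harmless except when $k=3$ and $\ch(\FF)=3$, which is precisely the situation excluded in the hypothesis. (One could instead simply treat $\rt A_2$, $\rt B_2$ and $\rt G_2$ one at a time, using the angles $120^\circ$, $135^\circ$ and $150^\circ$ between their generating roots.)

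For the closing assertion: the multiplication formula shows that every product $m_a\bullet m_b$ lies in $\Span\{m_a,m_b,m_c\}\subseteq J(\rt R)$ --- whenever $m_c$ appears, $c$ is a root of $\la a,b\ra\subseteq\rt R$ --- so $J(\rt R)$ is a $\bullet$-closed subspace of the ambient matrix algebra. That matrix algebra, with the product $\bullet$, is a Jordan algebra by Example~\ref{ex jordan}(i), and any $\bullet$-closed subspace inherits the Jordan identity, so $J(\rt R)$ is a Jordan algebra. I expect the only genuine work to be the rank-$2$ bookkeeping in the third case: deciding which of $a\pm b$ is the root $c$, and pinning down $(a,b)$ and $(c,c)$ uniformly across $\rt A_2$, $\rt B_2$ and $\rt G_2$. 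Everything else is a short, routine calculation.
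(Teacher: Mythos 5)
Your proof is correct, and for the main case it takes a genuinely different route from the paper. The first two cases ($a=\pm b$ and $(a,b)=0$) are handled exactly as in the paper, via idempotence and orthogonality of projections. For the third case the paper reduces to a standard explicit realisation of each of $\rt A_2$, $\rt B_2$, $\rt G_2$ and simply multiplies out the $2\times2$ or $3\times3$ matrices, whereas you give a uniform symbolic computation: the rank-one identity $m_am_b=\tfrac{(a,b)}{(a,a)(b,b)}a^tb$, the polarisation $a^tb+b^ta=c^tc-a^ta-b^tb$ with $c=a+b$, and the integrality constraint forcing the Cartan integer $\tfrac{2(a,b)}{(b,b)}$ to equal $-1$, which pins down $(a,b)=-\tfrac{k}{2}(a,a)$ and $(c,c)=(a,a)$. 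Your approach buys two things: it explains conceptually where the asymmetric coefficient $k$ on $m_b$ comes from (it is the length ratio $(b,b)/(a,a)$), and it makes transparent that the structure constants depend only on the inner products, so no ``without loss of generality we may take the standard realisation'' step is needed. The paper's case-by-case computation is shorter to write down and requires no facts about root strings or Cartan integers. Your remark on when the divisions by $(a,a)$ and $(b,b)=k(a,a)$ are legitimate correctly isolates the excluded case $k=3$, $\ch(\FF)=3$, and your closing argument (a $\bullet$-closed subspace of $\Mat_{n+1}(\FF)^+$ is a Jordan algebra) is the same as the paper's implicit one.
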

	\begin{proof}
		Projections are idempotents,
		so that $m_a\bullet m_{-a} = m_a\bullet m_a = m_a^2 = m_a$ for all $a\in \rt R$.

		Suppose that $(a,b) = 0$,
		so that $a$ and $b$ are orthogonal with respect to the inner product.
		Then $m_a$ and $m_b$ are mutually orthogonal projections in $V$,
		and hence $m_a m_b = m_b m_a = 0$, so $m_a\bullet m_b = 0$.

		Suppose now (by replacing $b$ with its negative if necessary)
		that the angle between $a$ and $b$ in $\rt R$ lies strictly between $\pi/2$ and $\pi$.
		Then an inspection of the possible root systems of rank $2$ shows that $a$ and $b$ are the fundamental roots of
		a root system of type $A_2$, $B_2$ or $G_2$, so in particular $a+b \in \rt R$.
		We now deal with these three possible cases separately.

		Assume first that $a$ and $b$ generate a root system of type $\rt A_2$.
		Then without loss of generality, we may assume that $\rt R = \rt A_2$
		in $\FF^3$ according to Example~\ref{ex proj An}, and
		\begin{gather}
			a = (1, -1, 0), \quad b = (0, 1, -1), \quad a+b = (1, 0, -1), \\
			m_a = \tfrac{1}{2}\begin{psmallmatrix*}[r]
					1 & -1 & 0 \\
					-1 & 1 & 0 \\
					0 & 0 & 0 \\
				\end{psmallmatrix*}, \quad
			m_b = \tfrac{1}{2}\begin{psmallmatrix*}[r]
					0 &	0 & 0 \\
					0 & 1 & -1 \\
					0 & -1 & 1 \\
				\end{psmallmatrix*}, \quad
			m_{a+b} = \tfrac{1}{2}\begin{psmallmatrix*}[r]
					1 & 0 & -1 \\
					0 & 0 & 0 \\
					-1 & 0 & 1 \\
				\end{psmallmatrix*}.
		\end{gather}
		Indeed $m_a\bullet m_b = \frac{1}{4}(m_a + m_b - m_c)$
		in this representation,
		and hence in general.

		Assume next that $a$ and $b$ generate a root system of type $\rt B_2$.
		Then without loss of generality, we may assume that $\rt B_2$
		in $\FF^2$, and
		\begin{gather}
			a = (1, 0), \quad b = (-1, 1), \quad a+b = (0, 1), \\
			m_a = \begin{psmallmatrix*}[r]
					1 & 0 \\
					0 & 0 \\
				\end{psmallmatrix*}, \quad
			m_b = \tfrac{1}{2}\begin{psmallmatrix*}[r]
					1 & -1 \\
					-1 & 1 \\
				\end{psmallmatrix*}, \quad
			m_{a+b} = \begin{psmallmatrix*}[r]
					0 & 0 \\
					0 & 1 \\
				\end{psmallmatrix*}.
		\end{gather}
		Here we see that $m_a\bullet m_b = \frac{1}{4}(m_a + 2m_b - m_c)$.

		Assume finally that $a$ and $b$ generate a root system of type $\rt G_2$, and that $\ch(\FF) \neq 3$.
		In the standard construction of $\rt G_2$ in $\FF^3$,
		\begin{gather}
			a = (1, -1, 0), \quad b = (-1, 2, -1), \quad a+b = (0, 1, -1), \\
			m_a = \tfrac{1}{2}\begin{psmallmatrix*}[r]
					1 & -1 & 0 \\
					-1 & 1 & 0 \\
					0 & 0 & 0 \\
				\end{psmallmatrix*}, \quad
			m_b = \tfrac{1}{6}\begin{psmallmatrix*}[r]
					1 & -2 & 1 \\
					-2 & 4 & -2 \\
					1 & -2 & 1 \\
				\end{psmallmatrix*}, \quad
			m_{a+b} = \tfrac{1}{2}\begin{psmallmatrix*}[r]
					0 &	0 & 0 \\
					0 & 1 & -1 \\
					0 & -1 & 1 \\
				\end{psmallmatrix*}.
		\end{gather}
		Similarly, $m_a\bullet m_b = \frac{1}{4}(m_a + 3m_b - m_c)$.
	\end{proof}

	The elements $m_a \in J(\rt R)$, for $a\in\rt R$, are linearly independent when $\rt R = \rt A_n$ (see Example~\ref{ex proj An}),
	but this is not true in general.
	More specifically, we have the following result.

	\begin{lemma}
		\label{lem dim rootsys projalg}
		Let $\rt R$ be an irreducible root system of rank $n$,
		and assume $\ch(\FF) \neq 3$ if $\rt R = \rt G_2$.
		Then $J(\rt R)$ has dimension $\frac{1}{2}n(n+1)$.
	\end{lemma}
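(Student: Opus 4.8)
The plan is to compute the dimension of $J(\rt R)$ by exhibiting it as the image of a linear map and analysing its kernel, case by case through the irreducible types. Since $J(\rt R) = \Span\{ m_a \mid a \in \rt R_+\}$, we have the obvious surjection $\FF^{\rt R_+} \to J(\rt R)$, and the dimension is $\size{\rt R_+}$ minus the dimension of the space of linear relations among the projection matrices. For type $\rt A_n$, Example~\ref{ex proj An} shows the $m_{a_{ij}}$ are linearly independent (they involve the off-diagonal matrix units $e_{ij} + e_{ji}$ with distinct supports), so $\dim J(\rt A_n) = \binom{n+1}{2} = \tfrac12 n(n+1)$, which matches the claim. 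For the other simply-laced types $\rt D_n$ and $\rt E_n$, the target dimension $\tfrac12 n(n+1)$ is strictly less than $\size{\rt R_+}$, so genuine relations must be found; for the non-simply-laced types $\rt B_n, \rt C_n, \rt F_4$ (and $\rt G_2$ in characteristic $\neq 3$) the same is true.

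The key observation making this uniform is that each $m_a$ lives in the space $S$ of symmetric $N \times N$ matrices that kill the orthogonal complement of $\Span(\rt R)$, i.e.\ effectively in the symmetric matrices on the span of $\rt R$, which has dimension $\tfrac12 n(n+1)$ exactly. So the claim is equivalent to showing $J(\rt R) = S$, i.e.\ that the projections $m_a$ for $a \in \rt R$ \emph{span all} symmetric bilinear forms on $\Span(\rt R)$. First I would reduce to this reformulation. Then, since $\rt R$ spans $\RR^n$ (hence $\FF^n$ after the integral realisation, using $\ch(\FF)\neq 2$ and $\ch(\FF)\neq 3$ for $\rt G_2$), the rank-one matrices $a^t a = (vv^t)\, m_a$ for $a \in \rt R$ span the symmetric square $\Sym^2(\Span \rt R)$ provided the vectors $\{a \otimes a \mid a \in \rt R\}$ span $\Sym^2$; this is a classical fact about root systems (the Killing form argument: the span is $W$-invariant, hence by irreducibility of $\Sym^2$ of the reflection representation for irreducible Weyl groups — or a short direct check — it is everything). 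The scalars $1/(aa^t)$ are invertible since $aa^t \in \{1,2,3\}$ (up to normalisation) and $\ch(\FF) \neq 2,3$ in the relevant cases, so dividing does not change the span. This gives $\dim J(\rt R) = \dim \Sym^2(\FF^n) = \tfrac12 n(n+1)$.

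The main obstacle is the characteristic-$p$ subtlety: the statement ``$\{a\otimes a\}$ spans $\Sym^2$'' is a statement over $\FF$, not over $\RR$, so one must check that no small prime (other than the excluded $2$ and $3$) causes a degeneracy. The clean way is to exhibit, for each irreducible type, an explicit $\ZZ[\tfrac12]$-basis of $\Sym^2$ among the $a^t a$ (or a determinant computation showing the relevant Gram-type matrix is a unit in $\ZZ[\tfrac12]$, resp.\ $\ZZ[\tfrac16]$ for $\rt G_2$); I would do this using the standard coordinate models as in the proof of Lemma~\ref{lem rootsys proj mult}. Concretely: for $\rt A_n$ it is done; for $\rt B_n/\rt C_n$ one uses $e_i^t e_i$ (from the short/long roots $\pm e_i$) together with the $\tfrac12$-scaled $(e_i\pm e_j)^t(e_i\pm e_j)$ to recover all $e_i^t e_j$; for $\rt D_n$ one recovers $e_i^t e_i$ as $\tfrac14[(e_i+e_j)^t(e_i+e_j) + (e_i-e_j)^t(e_i-e_j)]$ and then $e_i^t e_j$, needing only $\tfrac14$; for $\rt F_4$ and $\rt E_n$ one combines these patterns on sub-$\rt A$'s and sub-$\rt D$'s, again with only powers of $2$ in denominators; for $\rt G_2$ the explicit matrices $m_a, m_b, m_{a+b}$ in the proof of Lemma~\ref{lem rootsys proj mult} already span the symmetric $2\times 2$ matrices after clearing the denominator $6$, which is why $\ch(\FF)\neq 3$ is needed. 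Once the spanning is established in each case, the dimension count is immediate.
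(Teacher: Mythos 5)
Your proposal is correct in outline and proves the right statement, but it takes a genuinely different route from the paper for the spanning (lower-bound) half. The upper bound is the same argument in dual form: you observe that every $m_a$ lies in the space of symmetric matrices supported on $\Span_\FF(\rt R)$, which has dimension $\tfrac12 n(n+1)$; the paper instead exhibits the explicit basis $B = \{\al_i^t\al_j + \al_j^t\al_i \mid i \leq j\}$ built from the fundamental roots and checks its linear independence (both arguments silently use that the fundamental roots stay independent over $\FF$ in the chosen integral realisation, so neither is more rigorous here). For the lower bound, you correctly reject the Weyl-group-irreducibility argument as an argument over $\RR$ only, and propose a type-by-type verification that the $a^t a$ span, with explicit control of denominators ($2$, and $6$ for $\rt G_2$). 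That works, but it is the heavy way: your sketches for $\rt B_n$, $\rt C_n$, $\rt D_n$ are complete, while ``combine these patterns'' for $\rt E_n$ and $\rt F_4$ is the part you would still have to write out. The paper avoids the case analysis entirely with a single induction on the distance $d(i,j)$ in the Dynkin diagram: to produce $\al_i^t\al_j + \al_j^t\al_i$ it expands $a^t a$ for a positive root $a$ supported on the path from $i$ to $j$, the cross-terms at smaller distance being handled by induction. (One should take $a$ to be the sum of the simple roots along the path, so that the leading coefficient $\lm_i\lm_j$ equals $1$ and no divisibility issue arises in any characteristic; with that choice the induction is uniform across all types, and the hypothesis $\ch(\FF)\neq 3$ for $\rt G_2$ is needed only so that the long-root projections $m_b = \tfrac16 b^t b$ are defined at all.) So: your approach buys explicitness and makes the characteristic hypotheses visible root by root, while the paper's buys uniformity and brevity; if you pursue yours, the $\rt E_n$ and $\rt F_4$ cases must be completed, or better, replaced by the path-induction, which subsumes them.
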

	\begin{proof}
		Let $\Pi = \{ \al_1,\dots,\al_n \}$ be a set of fundamental roots for $\rt R$, and view the roots as elements of the row space $\FF^{n+1}$.
		We claim that the set
		\[ B = \{ \al_i^t \al_j + \al_j^t \al_i \mid 1 \leq i \leq j \leq n \} \]
		consisting of $\frac{1}{2}n(n+1)$ matrices forms a basis for $J(\rt R)$.

		As any projection matrix $m_a$ is a scalar multiple of $a^ta$,
		$J(\rt R)$ is spanned by the set $C = \{ a^t a \mid a \in \rt R_+ \}$.
		Since any $a \in \rt R_+$ is an integral linear combination $\sum_{i=1}^n\lm_i\al_i$ of the fundamental roots,
		$J(\rt R)$ is contained in $\Span(B)$:
		as $\ch(\FF)\neq 2$ we have $\al_i^t\al_i\in\Span(B)$, and
		\begin{equation}
			a^ta = \sum_{i=1}^n\lm_i^2\al_i^t\al_i + \sum_{\mathclap{1\leq i<j\leq n}}\lm_i\lm_j(\al_i^t\al_j + \al_j^t\al_i).
		\end{equation}

		To prove the converse, that $B$ is contained in $\Span(C)$,
		we use induction on the distance $d = d(i,j)$ between the nodes $i$ and $j$,
		for an element $\al_i^t\al_j + \al_j^t\al_i\in B$,
		in the Dynkin diagram of $\rt R$ formed by $\Pi$.
		The distance is well-defined because $\rt R$ is irreducible.
		The claim is obvious for $d=0$, that is, $i=j$.
		Assume now that each element $\al_i^t \al_j + \al_j^t \al_i$ where $d(i,j) < d$ is contained in $\Span(C)$,
		and consider an element	$\al_i^t \al_j + \al_j^t \al_i$ with $d(i,j) = d$.
		Let $a \in \rt R_+$ be an arbitrary positive root that is an integral linear combination of the roots on the unique path from $i$ to $j$
		in the Dynkin diagram, having a non-zero coefficient for both $i$ and $j$.
		Write $a = \lm_i \al_i + \dots + \lm_j \al_j$, so $\lm_i \lm_j \neq 0$.
		It now suffices to expand the expression for $a^t a \in C$ to see that $a^t a$ is the sum of $\lm_i \lm_j (\al_i^t \al_j + \al_j^t \al_i)$
		and terms that are in $\Span(C)$ by the induction hypothesis, and we conclude that $(\al_i^t \al_j + \al_j^t \al_i) \in \Span(C)$ as well.

		This shows that $\Span(B) = \Span(C) = J(\rt R)$, and it remains to show that the elements of $B$ are linearly independent.
		This is clear, however, because the set $\Pi$ extends to a basis of the vector space $\FF^{n+1}$, and with respect to this basis,
		distinct elements of $B$ have nonzero entries in distinct positions.
	\end{proof}

    We are now ready to show that the Matsuo algebras corresponding to the symmetric groups are indeed Jordan algebras.
	\begin{theorem}
		\label{thm jordan matsuo an}
		Let $n \geq 1$ be an integer, let $G = \Sym(n)$ with $D = (12)^G$, and let $A$ be the Matsuo algebra
        $M_{\nicefrac{1}{2}}(G,D) = M_{\nicefrac{1}{2}}(\Fspace(\rt A_{n-1}))$ over $\FF$.
        Then $A$ is isomorphic to the Jordan algebra of symmetric zero-sum $n\times n$ matrices over $\FF$
		from Example~\ref{ex jordan}\ref{ex jordan zero-sum}.
	\end{theorem}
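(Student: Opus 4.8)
The plan is to exhibit the isomorphism explicitly as the linear map
$\phi\colon M_{1/2}(\Fspace(\rt A_{n-1})) \to J(\rt A_{n-1})$ sending each basis point $a_{ij}$ (a positive root, in the notation of Example~\ref{ex proj An}) to the projection matrix $m_{a_{ij}}$, and then to identify $J(\rt A_{n-1})$ with the Jordan algebra $\mathcal H$ of symmetric zero\dash sum $n\times n$ matrices of Example~\ref{ex jordan}\ref{ex jordan zero-sum}. First I would check that $\phi$ is an algebra homomorphism; since both multiplications are bilinear it suffices to compare $\phi(xy)$ with $\phi(x)\bullet\phi(y)$ for basis points $x=a_{ij}$, $y=a_{kl}$, in the three cases of~\eqref{eq matsuo mult}. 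If $x=y$ then $xy=x$ while $m_{a_{ij}}\bullet m_{a_{ij}}=m_{a_{ij}}$, as projections are idempotent. If $x\neq y$ and $x\not\sim y$ in $\Fspace(\rt A_{n-1})$, then by the description in Lemma~\ref{lem fisch roots} the roots $\pm a_{ij}$ and $\pm a_{kl}$ do not span a subsystem of type $\rt A_2$; being distinct and non-proportional in a simply\dash laced system, they must then be orthogonal, so Lemma~\ref{lem rootsys proj mult} gives $m_{a_{ij}}\bullet m_{a_{kl}}=0=\phi(xy)$. Finally, if $x\sim y$, the line through them is $\{x,y,x\wedge y\}$, the set of three positive roots of the $\rt A_2$\dash subsystem $\la a_{ij},a_{kl}\ra$; here $xy=\tfrac14(x+y-x\wedge y)$ (with $\al=\nicefrac12$), while Lemma~\ref{lem rootsys proj mult} (with $k=1$) gives $m_{a_{ij}}\bullet m_{a_{kl}}=\tfrac14(m_{a_{ij}}+m_{a_{kl}}-m_c)$, where $c$ is the root among $\{\pm a_{ij}\pm a_{kl}\}$. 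Since that $c$ points in the direction of the third root on the line, $m_c=m_{x\wedge y}$ (as $m_v=m_{-v}$), and the two sides agree.

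Next I would observe that $\phi$ is surjective, because $J(\rt R)$ is by definition spanned by the $m_a$, and that both algebras have dimension $\binom n2$: the Matsuo algebra has dimension $\size{D}=\binom n2$, and the matrices $m_{a_{ij}}=\tfrac12(e_{ii}-e_{ij}-e_{ji}+e_{jj})$ for $1\le j<i\le n$ are patently linearly independent (alternatively invoke Lemma~\ref{lem dim rootsys projalg}). Hence $\phi$ is bijective, so it is an algebra isomorphism; in particular $A\cong J(\rt A_{n-1})$ is a Jordan algebra, the Jordan identity being supplied by Lemma~\ref{lem rootsys proj mult}. It then remains to check $J(\rt A_{n-1})=\mathcal H$. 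Each $m_{a_{ij}}$ is visibly symmetric with vanishing row and column sums, so $J(\rt A_{n-1})\subseteq\mathcal H$; conversely a symmetric zero\dash sum matrix is freely determined by its $\binom n2$ above\dash diagonal entries (the diagonal being forced), and $-2m_{a_{ij}}$ is precisely the natural basis matrix carrying a $1$ in positions $(i,j),(j,i)$ together with its forced diagonal, so these span $\mathcal H$. Comparing dimensions gives $J(\rt A_{n-1})=\mathcal H$, and since on both the product $\bullet$ is the restriction of the Jordan product of $\Mat_n(\FF)^+$, they coincide as Jordan algebras, which completes the argument.

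I expect the only genuine work to be the bookkeeping in the homomorphism check: matching the combinatorial datum $x\wedge y$ of the Fischer space $\Fspace(\rt A_{n-1})$ with the root\dash theoretic datum $c$ of Lemma~\ref{lem rootsys proj mult}, and confirming that non-collinear distinct points correspond to orthogonal roots and that the coefficient $\al/2=\nicefrac14$ matches $\tfrac14(m_a+k\cdot m_b-m_c)$ for $k=1$. The degenerate small cases are covered automatically: for $n=1$ both algebras are zero, and for $n=2$ both are one\dash dimensional, spanned by a single idempotent.
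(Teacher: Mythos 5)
Your proposal is correct and follows essentially the same route as the paper: the paper's phrase ``since it satisfies the same multiplication, it is a quotient of the Matsuo algebra'' is exactly the homomorphism check you carry out via Lemma~\ref{lem rootsys proj mult}, and both arguments then conclude by the dimension count $\tfrac{1}{2}n(n-1)$ on each side and the observation that the $m_{a_{ij}}$ span the symmetric zero-sum matrices. Your version merely makes explicit the case analysis (collinear vs.\ non-collinear points matching $\rt A_2$ vs.\ orthogonal pairs of roots) that the paper leaves implicit.
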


	\begin{proof}
		Let $\rt A_{n-1}$ be embedded in $V\cong\FF^{n}$ as in \eqref{eq An in Rn+1}.
		By the previous Lemmas~\ref{lem rootsys proj mult} and~\ref{lem dim rootsys projalg},
		$J(\rt A_{n-1})$ is a Jordan algebra of dimension $\frac{1}{2}n(n-1)$
		which, since it satisfies the same multiplication,
		is a quotient of the Matsuo algebra $M_{\nicefrac{1}{2}}(\Fspace(\rt A_{n-1}))$.
		However,
        \[ \dim M_{\nicefrac{1}{2}}(\Fspace(\rt A_{n-1})) = \size{\rt (A_{n-1})_+} = \tfrac{1}{2}n(n-1) = \dim J(\rt A_{n-1}) , \]
		and therefore $J(\rt A_{n-1})$ is isomorphic to $M_{\nicefrac{1}{2}}(\Fspace(\rt A_{n-1}))$.
		We see from \eqref{eq proj mat an} that the elements of $J(\rt A_{n-1})$
		are symmetric zero-sum matrices, and hence $J(\rt A_{n-1})$ is a subalgebra of the Jordan algebra
		of all symmetric zero-sum $n \times n$ matrices.
		As the latter algebra also has dimension $\frac{1}{2}n(n-1)$, the result follows.
	\end{proof}

    \begin{remark}
        \begin{compactenum}[\rm (i)]
            \item
                For the remaining irreducible simply-laced root systems, \ie those of type $\rt D_n$ and $\rt E_n$,
                the resulting Matsuo algebra is not a Jordan algebra, but the same argument as above now shows that it has a large quotient
                (of dimension $\tfrac{1}{2}n(n+1)$) which is a Jordan algebra,
                namely the Jordan algebra of all symmetric $n \times n$ matrices over $\FF$.
            \item
                When $\rt A_n$ can be embedded in $\FF^n$ (which is an assumption depending on $n$ and on the field~$\FF$),
                the algebra of symmetric zero-sum $(n+1) \times (n+1)$ matrices over $\FF$ is also isomorphic to
                the algebra of all symmetric $n \times n$ matrices over $\FF$.
        \end{compactenum}
    \end{remark}

\section{The Matsuo algebra of the affine plane $\cal P_3$}
	\label{sec-P3}

	For the Matsuo algebra of $\cal P_3$, corresponding to the $3$-transposition group $(G,D)$ with $G = 3^2 : 2$ as in Example~\ref{ex:fisch}\ref{fisch:P3},
	it will turn out that the situation completely degenerates when the characteristic
	of the underlying field $\FF$ is equal to $3$.
	Therefore, we distinguish the case $\ch(\FF) \neq 3$ from the case $\ch(\FF) = 3$.
    Our main results are given by Theorems~\ref{thm jordan matsuo pi3} and~\ref{thm jordan matsuo pi3-ch3} below.

\subsection{The case $\ch(\FF) \neq 3$}\label{ss:not3}

	Recall the affine plane $\cal P_3$ from Figure~\ref{fig-projpl} on page~\pageref{fig-projpl},
	and let $A$ be the Matsuo algebra $A = M_{\nicefrac{1}{2}}(\cal P_3)$
    over a field $\FF$ with $\ch(\FF) \neq 2,3$.
	For each $i \in \{ 1,\dots, 9 \}$,
	we let $p_i \in A$ be the generator
	corresponding to the point $i$ in Figure~\ref{fig-projpl}.

	\begin{lemma}
		\label{lem pi3 unital}
		The algebra $A$ is unital, with $\id = \frac{1}{3} \sum_{i=1}^{9} p_i$.
	\end{lemma}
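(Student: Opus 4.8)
The plan is to verify directly that $e = \tfrac{1}{3}\sum_{i=1}^{9} p_i$ acts as the identity on each basis element $p_j$, which by bilinearity suffices. By the symmetry of $\mathcal{P}_3$ under its automorphism group (which acts transitively on points), it is enough to check $e p_1 = p_1$. So first I would fix the point $1$ and sort the other eight points into those collinear with $1$ and those not. From Figure~\ref{fig-projpl}, every point of $\mathcal{P}_3$ is collinear with $1$: the four lines through $1$ are $\{1,2,3\}$, $\{1,4,7\}$, $\{1,5,9\}$, $\{1,6,8\}$, which together cover all nine points, so $1^{\not\sim} = \emptyset$.

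Next I would compute $p_1 p_j$ for each $j$ using the Matsuo multiplication rule~\eqref{eq matsuo mult} with $\al = \nicefrac{1}{2}$, so $\tfrac{\al}{2} = \tfrac{1}{4}$. We have $p_1 p_1 = p_1$, and for each line $\{1,j,k\}$ through $1$ we get $p_1 p_j = \tfrac{1}{4}(p_1 + p_j - p_k)$ and $p_1 p_k = \tfrac{1}{4}(p_1 + p_k - p_j)$, so the contribution of each such line $\{1,j,k\}$ to $\sum_j p_1 p_j$ is $p_1 p_j + p_1 p_k = \tfrac{1}{2} p_1$. Summing over the four lines through $1$ and adding the $p_1 p_1 = p_1$ term gives
\[
3\, e\, p_1 \;=\; \sum_{j=1}^{9} p_1 p_j \;=\; p_1 + 4 \cdot \tfrac{1}{2} p_1 \;=\; 3 p_1,
\]
hence $e p_1 = p_1$. (Note this uses $3 \neq 0$ in $\FF$, which holds since $\ch(\FF) \neq 3$.) By the point-transitivity of $\Aut(\mathcal{P}_3)$ — or simply by repeating the identical bookkeeping for any other index, since each point lies on exactly four lines and each line has three points — the same computation yields $e p_j = p_j$ for all $j$, so $e$ is a left (hence, by commutativity, two-sided) identity.

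The only thing requiring a moment of care is the combinatorial input: confirming that in $\mathcal{P}_3$ each point is collinear with every other point, equivalently that through each point there pass exactly four lines partitioning the remaining eight points into pairs. This is immediate from the structure of the affine plane of order $3$ (through any point there are $\tfrac{9-1}{3-1} = 4$ lines, each accounting for $2$ further points), and is also visible in Figure~\ref{fig-projpl}. There is no real obstacle beyond reading off this incidence data correctly; the algebra is then a one-line sum.
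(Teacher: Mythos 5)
Your proof is correct and is essentially the paper's own argument: both reduce by symmetry and linearity to checking $\bigl(\sum_i p_i\bigr)p_1 = 3p_1$, and both exploit the fact that the eight points other than $1$ are paired off by the lines through $1$ so that the non-$p_1$ terms cancel. Your grouping by lines through $1$ versus the paper's observation that each $p_j$ occurs once with each sign is only a cosmetic difference.
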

	\begin{proof}
		Let $z = \sum_{i=1}^{9} p_i$.
		By symmetry and linearity, it suffices to verify that $z p_1 = 3 p_1$.
		Indeed,
		\begin{align*}
			z p_1
			= p_1 + \tfrac{1}{4} \sum_{i=2}^{9} (p_1 + p_j - p_1 \wedge p_j)
			= 3 p_1 + \tfrac{1}{4} \sum_{i=2}^{9} (p_j - p_1 \wedge p_j) = 3 p_1
		\end{align*}
		since each of the $8$ elements $p_2,\dots,p_9$ occurs once with each sign in the sum.
	\end{proof}

	We will require idempotents associated with lines of $\cal P_3$.
	Let $L$ be any of the $12$ lines of $\cal P_3$.
	Then we define
	\begin{equation}
		e_L = - \tfrac{1}{3} \sum_{i \in L} p_i + \tfrac{1}{3} \sum_{i \not\in L} p_i \quad
		\text{and}
		\quad f_L = \id - e_L = \tfrac{2}{3} \sum_{i \in L} p_i .
	\end{equation}

	\begin{lemma}
		\label{lem pi3 line idempots}
		For each line $L$ of $\cal P_3$, $e_L$ and $f_L$ are idempotents in $A$.
		Furthermore, if $L$ and $M$ are two parallel lines in $\cal P_3$,
		then $e_L$ and $e_M$ are orthogonal, \ie $e_L e_M = 0$.
	\end{lemma}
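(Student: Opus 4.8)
The plan is to carry out everything with $f_L = \tfrac{2}{3}\sum_{i\in L} p_i$, which is notationally lighter than $e_L$, and to recover the assertions about $e_L$ from the identity $e_L = \id - f_L$ together with the unitality of $A$ established in Lemma~\ref{lem pi3 unital}. The first thing I would record is a purely combinatorial feature of $\cal P_3$: any two of its nine points are collinear, so that for distinct basis elements $p_i,p_j$ of $A$ the product is always $p_ip_j = \tfrac14(p_i+p_j-p_i\wedge p_j)$; the non-collinear branch of \eqref{eq matsuo mult} never occurs in this algebra.

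To show $f_L$ is an idempotent, write $L = \{a,b,c\}$, so that $\{a,b,c\}$ is closed under $\wedge$ with $a\wedge b = c$, $a\wedge c = b$ and $b\wedge c = a$. Expanding, $(p_a+p_b+p_c)^2 = (p_a+p_b+p_c) + 2(p_ap_b+p_ap_c+p_bp_c)$, and a one-line computation using the displayed products gives $p_ap_b+p_ap_c+p_bp_c = \tfrac14(p_a+p_b+p_c)$; hence $(p_a+p_b+p_c)^2 = \tfrac32(p_a+p_b+p_c)$ and therefore $f_L^2 = \tfrac49\cdot\tfrac32(p_a+p_b+p_c) = f_L$. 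Since $A$ is commutative and unital, $e_L = \id - f_L$ then satisfies $e_L^2 = \id - 2f_L + f_L^2 = \id - f_L = e_L$, so $e_L$ is an idempotent as well.

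For the orthogonality statement, let $L$ and $M$ be parallel and let $N$ be the remaining line of their parallel class, so that $L$, $M$, $N$ partition the nine points. The geometric input I would use is that, in the affine plane of order $3$, a line not belonging to a given parallel class meets each member of that class in exactly one point; consequently, for $i\in L$ and $j\in M$ the line through $i$ and $j$ is such a common transversal, and its third point $i\wedge j$ lies in $N$. Moreover the nine pairs $(i,j)\in L\times M$ are grouped into triples by the value of $i\wedge j$: a point $k\in N$ lies on exactly three lines other than $N$, each a transversal contributing one pair with $i\wedge j = k$. Substituting this into $f_Lf_M = \tfrac49\sum_{i\in L,\,j\in M}p_ip_j = \tfrac19\sum_{i\in L,\,j\in M}\bigl(p_i+p_j-p_i\wedge p_j\bigr)$ and collecting terms, I expect to obtain $f_Lf_M = \tfrac13\bigl(\sum_{i\in L}p_i + \sum_{j\in M}p_j - \sum_{k\in N}p_k\bigr)$. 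Comparing this with $f_L = \tfrac23\sum_{i\in L}p_i$, $f_M = \tfrac23\sum_{j\in M}p_j$ and $\id = \tfrac13\sum_{i=1}^{9} p_i$ shows $f_Lf_M = f_L+f_M-\id$, so that $e_Le_M = (\id-f_L)(\id-f_M) = \id-f_L-f_M+f_Lf_M = 0$.

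The only step that is not a routine calculation is the affine-plane bookkeeping in the last paragraph: that $i\wedge j$ always lands in $N$ and that the map $(i,j)\mapsto i\wedge j$ from $L\times M$ to $N$ has all fibres of size three. This is exactly where the incidence structure of $\cal P_3$ is used (parallel classes partition the point set; a transversal meets each line of a class exactly once); the rest of the argument is valid over any field with $\ch(\FF)\neq 2,3$.
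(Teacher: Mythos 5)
Your proof is correct and follows essentially the same route as the paper: reduce the idempotency of $e_L$ to that of $f_L$ via $e_L=\id-f_L$ and unitality, compute $f_L^2$ directly from the Matsuo product on a line, and verify $f_Lf_M=f_L+f_M-\id$ to get orthogonality. The only difference is presentational — the paper fixes a concrete labelling ($L=\{1,2,3\}$, $M=\{4,5,6\}$) and reads the cancellation off the figure, whereas you make the parallel-class bookkeeping (that $(i,j)\mapsto i\wedge j$ maps $L\times M$ onto $N$ with fibres of size three) explicit, which is a nice touch but not a different argument.
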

	\begin{proof}
		Without loss of generality, we may assume that $L = \{ 1,2,3 \}$.
		We first verify that $f_L$ is idempotent:
		\begin{align*}
			f_L^2
			& = \tfrac{4}{9} (p_1 + p_2 + p_3)^2
				= \tfrac{4}{9} (p_1 + p_2 + p_3) + 2 \cdot \tfrac{4}{9} \cdot \tfrac{1}{4} \sum_{\mathclap{1 \leq i < j \leq 3}} (p_i + p_j - p_i \wedge p_j) \\
			& = \tfrac{4}{9} (p_1 + p_2 + p_3) + \tfrac{2}{9} (p_1 + p_2 + p_3)
				= \tfrac{2}{3} (p_1 + p_2 + p_3) = f_L .
		\end{align*}
		It follows that also $e_L$ is idempotent, as
		\begin{equation}
			e_Le_L = (\id-f_L)(\id-f_L) = \id - 2\id f_L + f_L = \id-f_L = e_L.
		\end{equation}

		Now notice that $e_L e_M = 0$ if and only if $f_L f_M = f_L + f_M - \id$.
		Without loss of generality, for $L$ and $M$ parallel,
		we may assume that $L = \{ 1,2,3 \}$ and $M = \{ 4,5,6 \}$.
		Then
		\begin{align*}
			f_L f_M
			& = \tfrac{4}{9} \sum_{i=1}^3 \sum_{j=4}^6 p_i p_j
				= \tfrac{1}{9} \sum_{i=1}^3 \sum_{j=4}^6 (p_i + p_j - p_i \wedge p_j) \\
			& = \tfrac{1}{9} \Bigl( 3 \sum_{i=1}^6 p_i - 3 \sum_{i=7}^9 p_i \Bigr)
				= \tfrac{1}{3} \sum_{i=1}^6 p_i - \tfrac{1}{3} \sum_{i=7}^9 p_i = f_L + f_M - \id,
		\end{align*}
        finishing the proof.
	\end{proof}

	\begin{lemma}
		\label{lem pi3 eigsp e_L}
		The eigenvalues of $e_L$ are $1,0$ and $\nicefrac{1}{2}$.
		Denote the $\al$-eigenspace of $e_L$ by $A_\alpha^{L'}$.
		Then
		\begin{align*}
			A_{1}^{L'} &= \{ \lm e_L \mid \lm \in \FF \}, \\
			A_0^{L'} &= \Bigl\{ \sum_{i \in L} \lm_i p_i + \lm \Bigl( \sum_{i \in M} p_i - \sum_{i \in N} p_i \Bigr) \mid \lm_i, \lm \in \FF \Bigr\} , \\
			A_{\nicefrac{1}{2}}^{L'} &= \Bigl\{ \Bigl( \sum_{i \in M} \lm_i p_i + \sum_{j \in N} \mu_j p_j \Bigr) \mid \textstyle \sum_i \lm_i = 0, \sum_j \mu_j = 0 \Bigr\}.
		\end{align*}
	\end{lemma}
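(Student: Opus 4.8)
The plan is to replace $e_L$ by $f_L = \id - e_L$, which is more convenient to compute with: recall that $A$ is unital with unit $\id$ (Lemma~\ref{lem pi3 unital}) and that $f_L$ is idempotent (Lemma~\ref{lem pi3 line idempots}), so that $\ad(e_L) = \id_A - \ad(f_L)$ and it suffices to diagonalise $\ad(f_L)$. Take $L = \{1,2,3\}$ without loss of generality; from Figure~\ref{fig-projpl} the parallel class of $L$ is $\{L,M,N\}$ with $M = \{4,5,6\}$ and $N = \{7,8,9\}$, and these are the lines $M,N$ occurring in the statement (interchanging them only flips signs in the descriptions of $A_0^{L'}$ and $A_{1/2}^{L'}$, so the choice is immaterial). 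For a line $X$ write $s_X := \sum_{i\in X} p_i$, so that $f_L = \tfrac23 s_L$.

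The one genuine computation is the action of $f_L$ on a basis vector. For $i\in L$, formula~\eqref{eq matsuo mult} gives $f_L p_i = p_i$ after an immediate cancellation (the two $\wedge$-terms cancel the two points of $L$ other than $i$). For $j\notin L$, a short incidence argument---using that two parallel lines of $\cal P_3$ are disjoint, the three lines joining $j$ to the points of $L$ are distinct, their three third points avoid $L$, and they also avoid the line $L_j$ of the parallel class of $L$ through $j$, hence they are exactly the three points of the remaining parallel line $L_j'$---gives $f_L p_j = \tfrac23\cdot\tfrac14\sum_{x\in L}(p_x + p_j - p_x\wedge p_j) = \tfrac16 s_L + \tfrac12 p_j - \tfrac16 s_{L_j'}$.

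Now I would exhibit eigenvectors of $f_L$ and translate through $\ad(e_L) = \id_A - \ad(f_L)$. Each $p_i$ with $i\in L$ is a $1$-eigenvector of $f_L$; summing the formula for $f_L p_j$ over $j\in M$ and over $j\in N$ gives $f_L s_M = \tfrac12(s_L + s_M - s_N)$ and $f_L s_N = \tfrac12(s_L - s_M + s_N)$, so $f_L(s_M - s_N) = s_M - s_N$; every $v = \sum_{i\in M}\lm_i p_i + \sum_{j\in N}\mu_j p_j$ with $\sum_i\lm_i = \sum_j\mu_j = 0$ satisfies $f_L v = \tfrac12 v$ (the $s_L, s_M, s_N$ contributions cancel); and $f_L e_L = f_L(\id - f_L) = 0$. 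Correspondingly, $p_1,p_2,p_3$ and $s_M - s_N$ are $0$-eigenvectors of $e_L$, the vectors $v$ are $\tfrac12$-eigenvectors, and $e_L$ is a $1$-eigenvector---that is, they span the subspaces $A_0^{L'}$, $A_{1/2}^{L'}$, $A_1^{L'}$ described in the statement, of dimensions $4$ (the listed vectors $p_1,p_2,p_3,s_M-s_N$ are plainly independent), $4$ (one linear condition on each of two $3$-dimensional blocks), and $1$ (since $e_L\neq 0$). As these subspaces lie in eigenspaces for the distinct eigenvalues $0,\tfrac12,1$ and eigenvectors for distinct eigenvalues are linearly independent, the three eigenspaces are independent and the sum of their dimensions is at least $4+4+1 = 9 = \dim A$. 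Therefore equality holds throughout: $\ad(e_L)$ is diagonalisable with eigenvalues exactly $1,0,\tfrac12$ and with eigenspaces exactly the three listed subspaces. I expect the only step requiring real care to be the incidence argument underlying the formula for $f_L p_j$; everything after that is linear bookkeeping.
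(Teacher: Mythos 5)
Your proof is correct and follows essentially the same route as the paper: compute the adjoint action of the line idempotent explicitly via the incidence structure of $\cal P_3$ (the paper writes out $x e_L$ for a general $x$, which is equivalent to your formula for $f_L p_j$ on basis vectors), verify the claimed eigenvectors, and conclude by the dimension count $4+4+1=9=\dim A$. Your passage through $f_L=\id-e_L$ and the explicit incidence argument identifying the three points $p_x\wedge p_j$ with the third parallel line are just a slightly more detailed write-up of the same computation.
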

	\begin{proof}
		To prove this, without loss of generality, we assume that $L = \{ 1,2,3 \}$, $M = \{ 4,5,6 \}$ and $N = \{ 7,8,9 \}$.
		Let $x = \sum_{i=1}^9 \lm_i p_i$ be an arbitrary element of $A$.
		Then
		\begin{multline*}
			x e_L = -\tfrac{1}{6} (\lm_4 + \dots + \lm_9)(p_1 + p_2 + p_3)
				+ \tfrac{1}{2} (\lm_4 p_4 + \dots + \lm_9 p_9) \\
				+ \tfrac{1}{6} (\lm_4 + \lm_5 + \lm_6)(p_7 + p_8 + p_9)
				+ \tfrac{1}{6} (\lm_7 + \lm_8 + \lm_9)(p_4 + p_5 + p_6) .
		\end{multline*}
		It is now straightfoward to verify that the elements occuring in the statement of the lemma
		are indeed eigenvectors for $e_L$;
		since the dimensions of these three subspaces are $4$, $4$ and $1$ respectively,
		they together span all of $A$, and hence we have found all eigenvectors.
	\end{proof}

	As a consequence of the previous result,
	we get a ``Peirce decomposition'' for $A$,
	although in fact we have not yet established whether or not $A$ is a Jordan algebra.
	\begin{corollary}
		\label{cor pi3 peirce}
		Let $\{ L_1, L_2, L_3 \}$ be a set of parallel lines in $\cal P_3$,
		and denote the corresponding idempotents by $e_1$, $e_2$ and $e_3$, respectively.
		Let $A_{ii} = A_1^{L_i} = \langle e_i \rangle$ for each $i$,
		and let $A_{ij} = A_{\nicefrac{1}{2}}^{L_i} \cap A_{\nicefrac{1}{2}}^{L_j}$ for $i \neq j$.
		Then for any choice of $\{i,j,k\} = \{1,2,3\}$, we have
		\[
			A_{ij} = \Bigl\{ \sum_{\ell \in L_k} \lm_\ell p_\ell \mid \textstyle \sum_\ell \lm_\ell = 0 \Bigr\},
		\]
		so $\dim A_{ij} = 2$, and
		\[
			A = A_{11} \oplus A_{22} \oplus A_{33} \oplus A_{12} \oplus A_{13} \oplus A_{23}. \tag*{\qed}
		\]
	\end{corollary}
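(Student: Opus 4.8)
The plan is to read off the subspaces $A_{ij}$ directly from the explicit eigenspace descriptions in Lemma~\ref{lem pi3 eigsp e_L}, and then to establish the direct sum decomposition by a dimension count. First I would fix the parallel class $\{L_1,L_2,L_3\}$. Since the automorphism group of $\cal P_3$ is transitive on its parallel classes, and an isomorphism of partial triple systems induces an isomorphism of the corresponding Matsuo algebras, there is no loss of generality in taking $\{L_1,L_2,L_3\}$ to be the class $\{\{1,2,3\},\{4,5,6\},\{7,8,9\}\}$ used in the proof of Lemma~\ref{lem pi3 eigsp e_L}. That lemma then tells us that $A_{\nicefrac{1}{2}}^{L_i}$ consists precisely of the elements supported on the two lines of the class other than $L_i$, with the coefficients on each of those two lines summing to zero.

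Next I would intersect these descriptions. For $i\neq j$ the supports of $A_{\nicefrac{1}{2}}^{L_i}$ and $A_{\nicefrac{1}{2}}^{L_j}$ overlap exactly in the remaining line $L_k$ (where $\{i,j,k\}=\{1,2,3\}$), so any element of $A_{ij}$ is supported on $L_k$; the two zero-sum conditions coming from $A_{\nicefrac{1}{2}}^{L_i}$ and the two from $A_{\nicefrac{1}{2}}^{L_j}$ then collapse to the single requirement $\sum_{\ell\in L_k}\lm_\ell=0$. This yields $A_{ij}=\{\sum_{\ell\in L_k}\lm_\ell p_\ell\mid\sum_\ell\lm_\ell=0\}$, of dimension $2$, as claimed.

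For the decomposition, I would observe that $A_{12}$, $A_{13}$, $A_{23}$ are supported on the pairwise disjoint point sets $L_3$, $L_2$, $L_1$, so their sum is automatically direct and equals $W:=\{x=\sum_i\lm_i p_i\mid\sum_{i\in L_r}\lm_i=0\text{ for }r=1,2,3\}$, which has dimension $6$. On the other hand, $A_{11}\oplus A_{22}\oplus A_{33}=\langle e_1,e_2,e_3\rangle$: by Lemma~\ref{lem pi3 line idempots} the $e_r$ are pairwise orthogonal nonzero idempotents, hence linearly independent (multiply a relation $\sum_r c_r e_r=0$ by $e_s$ to get $c_s e_s=0$), so this space has dimension $3$. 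Finally, an element $\sum_r c_r e_r$ lying in $W$ must satisfy $\bigl(\sum_r c_r\bigr)-2c_s=0$ for each $s$, since the sum of the coefficients of $e_r$ over the line $L_s$ equals $-1$ if $r=s$ and $1$ otherwise; as $\ch(\FF)\neq2$ this forces $c_1=c_2=c_3=0$. Hence $\langle e_1,e_2,e_3\rangle\cap W=0$, and since $3+6=9=\dim A$ we conclude $A=\langle e_1,e_2,e_3\rangle\oplus W$, which is exactly the asserted decomposition $A=A_{11}\oplus A_{22}\oplus A_{33}\oplus A_{12}\oplus A_{13}\oplus A_{23}$.

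I do not expect a genuine obstacle here: the whole argument is elementary linear algebra once Lemma~\ref{lem pi3 eigsp e_L} is in hand. The only points requiring a little care are the bookkeeping of supports (tracking which line carries each $A_{ij}$) and the remark that the eigenspace lemma, proved for a single parallel class, transfers to an arbitrary one by the symmetry of $\cal P_3$.
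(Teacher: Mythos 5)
Your argument is correct and is exactly the intended one: the paper states Corollary~\ref{cor pi3 peirce} as an immediate consequence of Lemma~\ref{lem pi3 eigsp e_L}, and your intersection of supports plus the dimension count $3+6=9$ is precisely the verification that is left implicit. No gaps.
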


	In order to describe the corresponding Jordan algebra, we will need the following definition.
	Let $E$ be the quadratic \'etale extension $E = \FF[x] / (x^2 + 3)$ of $\FF$,
	and let $\sigma \in \Gal(E/\FF)$ be its non-trivial Galois automorphism.
	This $E$ may or may not be a field,
	depending on whether $-3$ is a square in $\FF$.
	We write $E = \FF[\zeta]$ with $\zeta^2 = -3$,
	so in particular $\zeta^\sigma = -\zeta$.

	The Jordan algebra $\mathcal{H}_3(E, *)$ over $\FF$
	consists of $3\times3$ matrices over $E$ fixed by $*$,
	where $*$ is the involution on $\Mat_3(E)$
	given by conjugate transposition, \ie $(x_{ij})^* = (x_{ji}^\sigma)$;
	see Example~\ref{ex jordan}\ref{ex jordan herm}.
	(Notice that $*$ is indeed $\FF$-linear.)
	We now establish some notation for $\cal H_3(E,*)$.
	Let $e_{ij}$ be the usual matrix units in $\Mat_3(E)$.
	Following the notation in \cite{jac}*{p.\@~125}, we define
	\begin{equation}
x[ij] = x e_{ij} + x^\sigma e_{ji} \in J 
\end{equation}
	for all $i,j$; in particular, $x[ii] = (x + x^\sigma) e_{ii}$ for all $i$, and $x[ji] = x^\sigma[ij]$ for all $i,j$.
	Recall from \cite{jac}*{p.\@~126} that the multiplication in $J$
	is completely determined by the multiplication rules
	\begin{alignat}{2}
		2 x[ij] \cdot y[jk] &= xy[ik] && \text{ for all $i,j,k$ distinct}, \label{eq:M1} \\
		2 x[ii] \cdot y[ij] &= (x+x^\sigma)y[ij] && \text{ for all } i \neq j, \label{eq:M2} \\
		2 x[ij] \cdot y[ij] &= xy^\sigma[ii] + xy^\sigma[jj] && \text{ for all } i \neq j, \label{eq:M3} \\
		2 x[ii] \cdot y[ii] &= (x+x^\sigma)(y+y^\sigma)[ii] \quad && \text{ for all } i, \label{eq:M4} \\
		x[ij] \cdot y[k\ell] &= 0 && \text{ if } \{i,j\} \cap \{k,\ell\} = \emptyset .\label{eq:M5} 
	\end{alignat}
	Finally, let $J_{ij} = \{ x[ij] \mid x \in E \}$ for all $1\leq i\leq j\leq3$, so in particular
	\begin{equation}
		J = J_{11} \oplus J_{22} \oplus J_{33} \oplus J_{12} \oplus J_{13} \oplus J_{23} . 
	\end{equation}

    \medskip
	We are now fully prepared to establish the isomorphism in the following theorem.
	\begin{theorem}
		\label{thm jordan matsuo pi3}
		Assume that $\ch(\FF) \neq 2,3$.
		The Matsuo algebra $M_{\nicefrac{1}{2}}(\cal P_3)$ over $\FF$
		is isomorphic to the Jordan algebra $\cal H_3(E,*)$.
	\end{theorem}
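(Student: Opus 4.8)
The plan is to produce an explicit $\FF$-linear isomorphism $\varphi\colon A\to J$ that matches the decomposition of $A$ from Corollary~\ref{cor pi3 peirce} with the decomposition $J=J_{11}\oplus J_{22}\oplus J_{33}\oplus J_{12}\oplus J_{13}\oplus J_{23}$, and then to verify multiplicativity by comparing products summand by summand with the rules \eqref{eq:M1}--\eqref{eq:M5}. Since $J=\cal H_3(E,*)=\cal H(\Mat_3(E),*)$ is automatically a Jordan algebra (the hermitian elements of an associative algebra; Example~\ref{ex jordan}\ref{ex jordan herm}), this simultaneously re-proves that $A$ is a Jordan algebra, so the Jordan identity for $A$ need not be checked directly.

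First I would coordinatise $\cal P_3$, identifying the nine points with $\ZZ/3\times\ZZ/3$ so that the parallel class $\{L_1,L_2,L_3\}$ of Corollary~\ref{cor pi3 peirce} becomes the set of ``vertical'' lines $L_c=\{(c,t)\mid t\in\ZZ/3\}$. A short affine computation then shows that for $(k,t)\in L_k$ and $(i,t')\in L_i$ with $k\neq i$ the third point of the transversal through them is $(j,-(t+t'))\in L_j$, with $j$ the remaining index. On the diagonal, set $\varphi(e_c)=e_{cc}$ (recall $e_{cc}=\tfrac12\,1[cc]$ spans $J_{cc}$ and $A_{cc}=\langle e_c\rangle$). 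For the off-diagonal summands, fix once and for all the element $\zeta\in E$ with $\zeta^2=-3$, $\zeta^\sigma=-\zeta$, and put $\omega=\tfrac12(\zeta-1)$; then $\omega^2+\omega+1=0$, $\omega^3=1$ and $\FF[\omega]=E$ (here we use $\ch\FF\neq2,3$), so for each line $L_c$ the ``discrete Fourier transform'' $\sum_t\lm_t\,p_{(c,t)}\mapsto\sum_t\lm_t\,\omega^{\ep_c t}$, for a sign $\ep_c\in\{\pm1\}$, restricts to an $\FF$-linear isomorphism of the zero-sum subspace of $\langle p_{(c,t)}\mid t\rangle$ onto $E$. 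By Corollary~\ref{cor pi3 peirce} that zero-sum subspace is exactly $A_{ij}$, where $\{i,j,k\}=\{1,2,3\}$; so I would define $\varphi$ on $A_{ij}$ to be $\kappa_{ij}$ times this transform, followed by $E\xrightarrow{\ \sim\ }J_{ij}$, $x\mapsto x[ij]$, for scalars $\kappa_{ij}\in\FF$ to be determined.

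Then one checks $\varphi(xy)=\varphi(x)\varphi(y)$ with $x,y$ ranging over the six summands; the vanishing rules reduce this to a handful of cases. (1) $e_c$ acts on $A$ as $1,\tfrac12,0$ on $A_{cc}$, on $A_{cd}$ ($d\neq c$), and on $A_{de}$ ($c\notin\{d,e\}$): this is immediate from Corollary~\ref{cor pi3 peirce} and Lemma~\ref{lem pi3 line idempots}, and matches \eqref{eq:M2}, \eqref{eq:M4}, \eqref{eq:M5} together with the orthogonality $e_{cc}e_{dd}=0$. (2) The products $A_{ij}A_{ij}$: using the Matsuo rule within the line $L_k$ and the zero-sum condition, one finds $A_{ij}A_{ij}\subseteq\langle e_i+e_j\rangle$ with the $A_{ij}$-component cancelling identically, and the induced symmetric bilinear form on $A_{ij}\cong E$ is proportional to $(x,y)\mapsto\tr_{E/\FF}(xy^\sigma)$, matching \eqref{eq:M3}. (3) The ``trilinear'' products $A_{ij}A_{jk}$: the zero-sum condition again forces the $L_i$- and $L_k$-components to vanish and the $L_j$-component to be zero-sum, so $A_{ij}A_{jk}\subseteq A_{ik}$; summing the surviving terms $-\tfrac14\,p_{(j,-(t+t'))}$ against the Fourier coefficients collapses, via $1+\omega+\omega^2=0$, to a scalar multiple of a product in $E$, to be matched with \eqref{eq:M1}.

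Step (3) is where the real work lies, and is the step I expect to be the main obstacle. The third-point map on heights, $(t,t')\mapsto-(t+t')$, is the same for every pair of vertical lines, so to keep the three products $A_{12}A_{23}\to A_{13}$, $A_{12}A_{13}\to A_{23}$, $A_{13}A_{23}\to A_{12}$ mutually consistent one must exploit that the common index occupies a different slot in each: read off correctly via $x[ij]=x^\sigma[ji]$, \eqref{eq:M1} then introduces a Galois twist $\omega\mapsto\omega^{-1}$ on one factor in two of the three products and none in the third. Tracking this carefully, all three products become simultaneously consistent for a single choice of the signs $\ep_c$ (one sign opposite to the other two), and the scalars $\kappa_{ij}$ --- each forced to be $\pm\tfrac12$, with the product of the three equal to $-\tfrac18$ --- can then be pinned down to reconcile the factors $\tfrac14,\tfrac13,2$ appearing in \eqref{eq:M1}--\eqref{eq:M5}. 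Once $\varphi$ is verified to be an algebra homomorphism it is automatically bijective, since it maps each summand of $A$ isomorphically onto the corresponding summand of $J$; hence $A\cong\cal H_3(E,*)$, and in particular $A$ is a Jordan algebra.
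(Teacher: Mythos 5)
Your proposal is correct and follows essentially the same route as the paper: define an $\FF$-linear bijection matching the decomposition of Corollary~\ref{cor pi3 peirce} with $J_{11}\oplus\dots\oplus J_{23}$ and verify multiplicativity case by case against \eqref{eq:M1}--\eqref{eq:M5}, letting the known Jordan structure of $\cal H_3(E,*)$ do the rest. Your ``discrete Fourier transform'' parametrisation via a cube root of unity $\omega=\tfrac12(\zeta-1)$ is exactly the reformulation of the paper's explicit map $\eta$ recorded in the remark following the theorem, and you correctly isolate the one genuine subtlety (the Galois twist coming from $x[ji]=x^\sigma[ij]$ in rule~\eqref{eq:M1}, resolved by taking one of the three signs opposite to the other two).
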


	\begin{proof}
	    Let $A = M_{\nicefrac{1}{2}}(\cal P_3)$,
		and consider the decomposition of $A$ as in Corollary~\ref{cor pi3 peirce}.
		Let $\eta$ be the $\FF$-vector space isomorphism from $A$ to $J$
		given on each of the six Peirce subspaces by, for $\lm,\mu\in\FF$,
		\begin{align*}
			e_i &\mapsto e_{ii} = \tfrac{1}{2}[ii] \quad \text{ for all } i, \\
			\lm p_1 + \mu p_2 - (\lm + \mu) p_3 &\mapsto \bigl( \tfrac{3}{4}(\lm + \mu) + \tfrac{1}{4}(\lm - \mu) \zeta \bigr) [23] , \\
			\lm p_4 + \mu p_5 - (\lm + \mu) p_6 &\mapsto \bigl( \tfrac{3}{4}(\lm + \mu) + \tfrac{1}{4}(\mu - \lm) \zeta \bigr) [13] , \\
			\lm p_7 + \mu p_8 - (\lm + \mu) p_9 &\mapsto \bigl( \tfrac{3}{4}(\lm + \mu) + \tfrac{1}{4}(\lm - \mu) \zeta \bigr) [12].
		\end{align*}
		We will verify that $\eta$ is an isomorphism of Jordan algebras
		by going through each of the cases occuring in
		the multiplication rules \eqref{eq:M1} through \eqref{eq:M5}.

		For case~\eqref{eq:M1},
		assume that $i=1$, $j=2$ and $k=3$; the other possibilities for $i,j,k$ are similar.
		So let $x_{12} = \lm p_7 + \mu p_8 - (\lm + \mu) p_9 \in A_{12}$ and
		$y_{23} = \lm' p_1 + \mu' p_2 - (\lm' + \mu') p_3 \in A_{23}$ be arbitrary.
		Then
		\begin{align*}
				2 x_{12} y_{23}
				&= \tfrac{1}{2} \bigl( - \lm \lm' p_4 - \lm \mu' p_6 + \lm (\lm' + \mu') p_5 
					- \mu \lm' p_6 - \mu \mu' p_5 + \mu (\lm' + \mu') p_4 \\
					& \hspace*{7ex} + (\lm + \mu) \lm' p_5 + (\lm + \mu) \mu' p_4 - (\lm + \mu)(\lm' + \mu') p_6 \bigr) \\
				&= \tfrac{1}{2} \bigl( - \lm \lm' + 2 \mu \mu' + \lm \mu' + \mu \lm' \bigr) p_4
					+ \tfrac{1}{2} \bigl( 2 \lm \lm' - \mu \mu' + \lm \mu' + \mu \lm' \bigr) p_5 \\
					& \hspace*{7ex} + \tfrac{1}{2} \bigl( - \lm \lm' - \mu \mu' - 2 \lm \mu' - 2 \mu \lm' \bigr) p_6 ,
		\end{align*}
		so
		\begin{equation}
 \eta(2 x_{12} y_{23}) = 
				\bigl( \tfrac{3}{8}(\lm \lm' + \mu \mu' + 2 \lm \mu' + 2 \mu \lm') + \tfrac{3}{8}(\lm \lm' - \mu \mu') \zeta \bigr)[13] . 
\end{equation}
		On the other hand,
		\begin{align*}
				& \bigl( \tfrac{3}{4}(\lm + \mu) + \tfrac{1}{4}(\lm - \mu) \zeta \bigr)
								\cdot \bigl( \tfrac{3}{4}(\lm' + \mu') + \tfrac{1}{4}(\lm' - \mu') \zeta \bigr) \\
				&  \qquad\quad = \bigl( \tfrac{9}{16}(\lm + \mu)(\lm' + \mu') - \tfrac{3}{16}(\lm - \mu)(\lm' - \mu') \bigr) 
								+ \tfrac{3}{16} \bigl( (\lm + \mu)(\lm' - \mu') + (\lm - \mu)(\lm' + \mu') \bigr) \zeta \\
				& \qquad\quad = \bigl( \tfrac{3}{8}(\lm \lm' + \mu \mu' + 2 \lm \mu' + 2 \mu \lm') + \tfrac{3}{8}(\lm \lm' - \mu \mu') \zeta \bigr) ;
		\end{align*}
		we conclude that $\eta(2 x_{12} y_{23}) = 2 \eta(x_{12}) \eta(y_{23})$.

		The multiplication rule~\eqref{eq:M2} is equivalent to the statement that $y[ij]$ is a $\tfrac{1}{2}$-eigenvector for $e_{ii}$.
		Since $A_{ij}$ is contained in the $\tfrac{1}{2}$-eigenspace of $e_i$, it follows that $\eta(e_i y_{ij}) = \eta(e_i) \eta(y_{ij})$
		for all $i \neq j$ and all $y_{ij} \in A_{ij}$.

		We now check~\eqref{eq:M3},
		and again we assume that $i=1$ and $j=2$ since the other cases are similar.
		So let $x_{12} = \lm p_7 + \mu p_8 - (\lm + \mu) p_9 \in A_{12}$
		and $y_{12} = \lm' p_7 + \mu' p_8 - (\lm' + \mu') p_9 \in A_{23}$ be arbitrary.
		Then
		\begin{multline*}
				2 x_{12} y_{12}
				= \bigl( (\lm \lm' + \mu \mu') + \tfrac{1}{2} (\lm \mu' + \mu \lm') \bigr) (p_7 + p_8 + p_9) \\
				= \bigl( \tfrac{3}{2} (\lm \lm' + \mu \mu') + \tfrac{3}{4} (\lm \mu' + \mu \lm') \bigr) (e_1 + e_2) .
		\end{multline*}
		On the other hand,
		\begin{align*}
				& \bigl( \tfrac{3}{4}(\lm + \mu) + \tfrac{1}{4}(\lm - \mu) \zeta \bigr)
								\cdot \bigl( \tfrac{3}{4}(\lm' + \mu') + \tfrac{1}{4}(\lm' - \mu') \zeta \bigr)^\sigma \\
								& \qquad\qquad = \bigl( \tfrac{3}{4}(\lm + \mu) + \tfrac{1}{4}(\lm - \mu) \zeta \bigr)
												\cdot \bigl( \tfrac{3}{4}(\lm' + \mu') - \tfrac{1}{4}(\lm' - \mu') \zeta \bigr) \\
								& \qquad\qquad = \bigl( \tfrac{3}{4} (\lm \lm' + \mu \mu') + \tfrac{3}{8} (\lm \mu' + \mu \lm') \bigr)
										+ \tfrac{3}{8} ( \lm \mu' - \mu \lm' ) \zeta ,
		\end{align*}
		and hence
		\begin{multline*}
				\bigl( \tfrac{3}{4}(\lm + \mu) + \tfrac{1}{4}(\lm - \mu) \zeta \bigr)
								\bigl( \tfrac{3}{4}(\lm' + \mu') + \tfrac{1}{4}(\lm' - \mu') \zeta \bigr)^\sigma [ii]
				= \bigl( \tfrac{3}{2} (\lm \lm' + \mu \mu') + \tfrac{3}{4} (\lm \mu' + \mu \lm') \bigr) e_{ii} .
		\end{multline*}
		We conclude that $\eta(2 x_{12} y_{12}) = 2 \eta(x_{12}) \eta(y_{12})$.

		Case~\eqref{eq:M4} is a consequence of the definition of $x[ii] = (x + x^\sigma) e_{ii}$
		combined with the fact that $e_i$, by Lemma~\ref{lem pi3 line idempots},
		and $e_{ii}$ are idempotents.

		Finally, to deal with case~\eqref{eq:M5},
		we have to verify that $A_{ij} A_{k\ell} = 0$ as soon as $\{i,j\} \cap \{k,\ell\} = \emptyset$.
		If $i=j$ and $k=\ell$, then this again is an immediate consequence
		of Lemma~\ref{lem pi3 line idempots},
		that $e_i$ and $e_j$ are orthogonal idempotents.
		If $i=j$ and $k \neq \ell$,
		then $A_{k\ell}$ is contained in the $\tfrac{1}{2}$-eigenspace of both $e_k$ and $e_\ell$,
		and hence in the $0$-eigenspace of $\id - e_k - e_\ell = e_i$;
		it follows that $A_{ii} A_{k\ell} = 0$.
	\end{proof}

    \begin{remark}
    \begin{compactenum}
        \item
            If we knew in advance that $M_{\nicefrac{1}{2}}(\cal P_3)$ was a Jordan algebra,
            the calculations in the proof of Theorem~\ref{thm jordan matsuo pi3}
            could be replaced by an application of Jacobson's ``Strong Coordinatization Theorem'',
            \cite{jac}*{Theorem 5, p.\@~133}.
            Indeed, the idempotents $e_1$, $e_2$ and $e_3$ are {\em strongly connected},
            and the {\em coordinatizing algebra},
            an algebra structure on $A_{ij}$ for $i\neq j$,
            is isomorphic to the $\FF$-algebra $E$,
            which can be obtained from \cite{jac}*{Lemma 3, p.\@~135}.
            This is how we obtained the formulas for the isomorphism $\eta$.
        \item
            Jeroen Demeyer pointed out to us that there is another way to describe the extension $E$, namely as $E = \FF[x]/(x^2 + x + 1)$.
            Let $\beta$ be one of the roots of $x^2 + x + 1$ in $E$, and let $\xi = \beta+1$.
            Notice that $\beta$ has order $3$ since $\ch(\FF) \neq 3$, and $\xi^2 = \beta$ so $\xi$ has order $6$; we can think of $\xi$ as a $6$-th root of unity
            (even though $E$ might not be a field).
            Then the formulas for the isomorphism $\eta$ occuring in the beginning of the proof of Theorem~\ref{thm jordan matsuo pi3} take the form
			\[ \lm p_1 + \mu p_2 + \nu p_3 \mapsto \bigl( \lm \xi + \mu \xi^5 + \nu \xi^3 \bigr) [23] = \bigl( \lm \xi + \mu \xi^\sigma - \nu \bigr) [23] , \]
            for all $\lm,\mu,\nu \in \FF$ with $\lm + \mu + \nu = 0$ (and similarly for the other expressions).
    \end{compactenum}
    \end{remark}

\subsection{The case $\ch(\FF) = 3$}\label{ss:eq3}

	In order to describe the situation in the case of $\ch(\FF)=3$,
	we require some more definitions from the theory of Jordan algebras.
	\begin{definition}[\cite{jac}]
		Let $J$ be a Jordan algebra over $\FF$.
		\begin{compactenum}
			\item
				For every $a,b \in J$, we define $U_a(b) = 2a(ab) - a^2 b$.
				This defines, for each $a$, a linear map $U_a \colon J \to J$,
				known as the {\em $U$-operator} of $a$.
			\item
				An element $a \in J$ is called an {\em absolute zero divisor} if $U_a$ is the zero map.
			\item
				An element $a \in J$ is called {\em trivial} if $U_a$ is the zero map and moreover $a^2 = 0$.
		\end{compactenum}
	\end{definition}

	\begin{theorem}
		\label{thm jordan matsuo pi3-ch3}
		The Matsuo algebra $M_{\nicefrac{1}{2}}(\cal P_3)$ over a field $\FF$ of characteristic $3$
		is isomorphic to a $9$-dimensional non-unital Jordan algebra with an $8$-dimensional radical $R$.
		Furthermore, there is a chain of ideals of $J$
		\begin{equation}
			0 < Z < T < R < J \quad \text{ with }
			\dim Z = 1, \; \dim T = 6, \text{ and }
			Z = T^2, \; T = R^2,
		\end{equation}
		such that the elements of $Z$ are trivial,
		the elements of $T$ are absolute zero divisors,
		and $J/R$ is a unital Jordan algebra isomorphic to $\FF$.
	\end{theorem}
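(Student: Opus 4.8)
The plan is to compute the multiplication of $J := M_{\nicefrac{1}{2}}(\cal P_3)$ explicitly in characteristic~$3$, to recognise it as a twisted form of a group algebra, and then to extract the entire structure from a filtration of that group algebra. Identify $\cal P_3$ with the affine plane $\FF_3^2$; since three collinear points of $\FF_3^2$ sum to $0$, and since $\tfrac14 = 1$ in characteristic~$3$, the Matsuo multiplication becomes $p_v p_w = p_v + p_w - p_{-v-w}$ for $v \neq w$ (and $p_v^2 = p_v$). Let $B$ be the group algebra $\FF[\FF_3^2]$ with its ordinary associative, commutative, unital product, written by juxtaposition, on the same basis $\{p_v\}_{v\in\FF_3^2}$; let $\varepsilon\colon B\to\FF$ be the augmentation and $\iota$ the algebra involution of $B$ induced by $v\mapsto -v$. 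A check on pairs of basis elements gives the identity
\[
	a \ast b \;=\; \varepsilon(b)\,a + \varepsilon(a)\,b - \iota(ab),
\]
where $\ast$ denotes the Matsuo product; so $J$ and $B$ share the underlying vector space, $J = (B,\ast)$.

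To see that $J$ is a Jordan algebra, observe that the structure constants of $M_{\nicefrac{1}{2}}(\cal P_3)$ lie in $\ZZ[\nicefrac12]$ and that the Jordan identity $(ab)a^2 = a(ba^2)$ is a polynomial identity with integer coefficients. It holds for $M_{\nicefrac{1}{2}}(\cal P_3)$ over $\QQ$ by Theorem~\ref{thm jordan matsuo pi3}; since the $\ZZ[\nicefrac12]$-form of the algebra is a free module, it embeds into its rationalisation, so the identity holds over $\ZZ[\nicefrac12]$, hence --- being a polynomial identity valid over an infinite integral domain --- formally, and therefore over any field of characteristic~$3$ (where $\nicefrac12 = 2$). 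In particular $J$, and all of its quotients, are Jordan algebras. (One can also verify the identity directly from the displayed formula.)

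Now identify $B$ with $\FF[X,Y]/(X^3,Y^3)$ via $p_{(1,0)} = 1+X$, $p_{(0,1)} = 1+Y$ (legitimate because $x^3 - 1 = (x-1)^3$ in characteristic~$3$). Then $B$ is local with maximal ideal $\mathfrak m = (X,Y) = \ker\varepsilon$; the monomial basis gives $\dim \mathfrak m = 8$, $\dim\mathfrak m^2 = 6$, $\dim\mathfrak m^3 = 3$, $\dim\mathfrak m^4 = 1$, $\mathfrak m^5 = 0$; the involution $\iota$ preserves every $\mathfrak m^k$; and a direct computation ($\iota(X^2) = X^2$, $\iota(Y^2) = Y^2$, $\iota(XY) \equiv XY \bmod\mathfrak m^3$) shows that $\iota$ acts trivially on $\mathfrak m^2/\mathfrak m^3$, so that $t - \iota(t) \in \mathfrak m^3$ for all $t\in\mathfrak m^2$. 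Put $R := \mathfrak m = \ker\varepsilon$, $T := \mathfrak m^2$, $Z := \mathfrak m^4$. Since $\varepsilon$ is an algebra homomorphism $J\to\FF$, the subspace $R$ is an ideal of $J$, $J/R\cong\FF$ as unital Jordan algebras, and $\dim R = 8$, $\dim T = 6$, $\dim Z = 1$. The key point is that for $a,b\in R$ the displayed formula collapses to $a\ast b = -\iota(ab)$. Hence $R\ast R = \iota(\mathfrak m^2) = T$, $T\ast T = \iota(\mathfrak m^4) = Z$ and $Z\ast Z = 0$, so $R$ is solvable with $R^{2^3} = 0$; as $J/R$ is semisimple this forces $R = \Rad(J)$, and the same formula (with $\iota(\mathfrak m^2) = \mathfrak m^2$) shows that $T$ and $Z$ are ideals of $J$. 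Finally $z := \sum_{v\in\FF_3^2} p_v$ spans $\mathfrak m^4 = Z$ (indeed $z = X^2Y^2$), and $z\ast p_v = \varepsilon(p_v)z + 0 - \iota(z) = z - z = 0$ since $z\,p_v = z$ and $\iota(z) = z$; thus $Z\ast J = 0$, so $J$ is non-unital and the elements of $Z$ are trivial.

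It remains to show that the elements of $T = \mathfrak m^2$ are absolute zero divisors. For $t\in T$ we have $t\ast t = -\iota(t^2)\in\iota(\mathfrak m^4) = Z$, so $(t\ast t)\ast b = 0$ for all $b$. Moreover $t\ast b = \varepsilon(b)\,t - \iota(tb) \in \mathfrak m^2 = R$, so, applying the collapsed formula again, $t\ast(t\ast b) = -\iota\bigl(t(\varepsilon(b)\,t - \iota(tb))\bigr) = -\iota\bigl(\varepsilon(b)\,t^2 - t\,\iota(t)\,\iota(b)\bigr)$. Here $t\,\iota(t)\in\mathfrak m^4$ and $\iota(b)\equiv\varepsilon(b)\bmod\mathfrak m$, so $t\,\iota(t)\,\iota(b) = \varepsilon(b)\,t\,\iota(t)$ because $\mathfrak m^4\mathfrak m = \mathfrak m^5 = 0$; hence $t\bigl(\varepsilon(b)t - \iota(tb)\bigr) = \varepsilon(b)\,t\bigl(t - \iota(t)\bigr) \in \mathfrak m^2\mathfrak m^3 = 0$, and therefore $U_t(b) = 2\,t\ast(t\ast b) - (t\ast t)\ast b = 0$. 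This yields the chain $0 < Z < T < R < J$ with the stated dimensions and with $Z = T^2$ and $T = R^2$. I expect this last computation to be the main obstacle: it genuinely uses both that $\iota$ is trivial on $\mathfrak m^2/\mathfrak m^3$ and that $\mathfrak m^5 = 0$, and it is the twisted-group-algebra picture of $J$ that makes these available. Establishing that $J$ is a Jordan algebra at all is the other delicate point, handled above by reduction modulo~$3$.
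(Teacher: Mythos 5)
Your proof is correct, and it takes a genuinely different route from the paper's. The paper verifies the (linearised) Jordan identity over $\FF_3$ by computer, writes down the ideals $Z$, $T$, $R$ combinatorially as spans of line-sums and zero-sum vectors, and checks the absolute-zero-divisor property for $T$ by a hands-on computation with parallel classes of lines. You instead identify $\cal P_3$ with $\FF_3^2$ and realise the whole algebra as a twisted modular group algebra: $J=(B,\ast)$ with $B=\FF[\FF_3^2]\cong\FF[X,Y]/(X^3,Y^3)$ and $a\ast b=\varepsilon(b)a+\varepsilon(a)b-\iota(ab)$, which one checks on basis vectors exactly as you do (including the diagonal case, where $-2v=v$ makes the formula reproduce $p_v^2=p_v$). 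This buys you three things: the Jordan identity comes for free by specialisation from characteristic $0$ --- the structure constants live in $\ZZ[\nicefrac12]$, the identity holds over $\QQ$ by Theorem~\ref{thm jordan matsuo pi3}, and a polynomial identity valid on an infinite integral domain holds formally, hence after any base change to a $\ZZ[\nicefrac12]$-algebra (this is not circular, since Theorem~\ref{thm jordan matsuo pi3} is independent of the present statement); the ideals become $R=\frak m$, $T=\frak m^2$, $Z=\frak m^4$, so the chain, the dimensions, and the relations $T=R\ast R$, $Z=T\ast T$ are immediate from $a\ast b=-\iota(ab)$ on $\ker\varepsilon$; and the absolute-zero-divisor computation reduces to the two structural facts $\frak m^5=0$ and $t-\iota(t)\in\frak m^3$ for $t\in\frak m^2$, both of which you verify. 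The paper's argument is more self-contained in characteristic $3$, while yours explains \emph{why} the algebra degenerates (nilpotency of the augmentation ideal of the modular group algebra) and replaces the computer check by a conceptual one. Two cosmetic points: ``$t\ast b\in\frak m^2=R$'' should read $t\ast b\in\frak m^2\subseteq R=\frak m$; and your $T=\frak m^2$ does coincide with the paper's span of line-sums (each line-sum is a unit times the square of a direction element), though nothing in the statement requires you to check this.
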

	\begin{proof}
		Let $A = M_{\nicefrac{1}{2}}(\cal P_3)$.
		As previously, we will use the affine plane $\cal P_3$ from Figure~\ref{fig-projpl},
		and for each $i \in \{ 1,\dots, 9 \}$, we let $p_i \in A$ be the generator
		corresponding to the point $i$ in Figure~\ref{fig-projpl}.
		Since $\ch(\FF) = 3$, however,
		the element $z = \sum_{i=1}^9 p_i$ is an annihilating element of the algebra $A$,
		\ie $zx = 0$ for all $x \in A$.
		In particular, $A$ is non-unital.

		It is a straightforward but lengthy calculation to verify
		that the linearised Jordan identity, see \eg \cite{mccrimmon}, Proposition 1.8.5 (1),
		\begin{equation}
			((xz)y)w + ((zw)y)x + ((wx)y)z
				= (xz)(yw) + (zw)(yx) + (wx)(yz),
		\end{equation}
		holds over $\FF_3$ and hence $A$ is a Jordan algebra over $\FF_3$
		and over any field extension of $\FF_3$,
		that is, over any field of characteristic $3$.
		We performed this check by computer.


		It is now easy to verify that
		\begin{equation}
			Z = \la p_1+\dotsm+p_9 \ra, \quad
			T = \la p_i+p_j+p_k\mid\{i,j,k\}\text{ a line}\ra, \quad
			R = \Bigl\la \sum_{i=1}^9\lm_i p_i\mid\sum_{i=1}^9\lm_i=0 \Bigr\ra, 
		\end{equation}
		are a chain of ideals in $A$,
		with $\dim Z = 1, \dim T = 6, \dim R = 8$,
		and that
		\begin{equation}
			R^2 = T,\quad
			T^2 = Z,\quad
			Z^2 = 0,
		\end{equation}
		and therefore $R$ is a solvable ideal.
		We already showed that $z$, spanning $Z$, is trivial.
		To show that $T$ consists of absolute zero divisors,
		let $t\in T$ be arbitrary.
		As $T^2 = Z$ we have $tt\in Z$ and hence $(tt)x = 0$ for all $x\in A$.
		To show that $t(tx) = 0$, by linearity of $x\in A$ we may take $x = p_i$ a point.
		If $\ell$ is a line containing $p_i$, then $\Bigl(\sum_{p\in\ell}p\Bigr)p_i = 0$;
		if $p_i\not\in\ell$, then $p_i$ lies in one of the two lines $\ell',\ell''$ parallel to $\ell$,
		say $\ell'$, and $\Bigl(\sum_{p\in\ell}p\Bigr)p_i = \sum_{p\in\ell}p - \sum_{p\in\ell''}p$.
		Let $m$ be any line; then
		\begin{equation}
			\Bigl(\sum_{q\in m}q\Bigr) \Bigl(\sum_{p\in\ell}p - \sum_{p\in\ell''}p\Bigr) = 0.
		\end{equation}
		As $t$ is a linear sum of terms of the form $\sum_{p\in\ell}p$ for $\ell$ a line,
		this shows that $t(tp_i) = 0$ for all $i$
		and hence $t(tx) = 0$ for all $x\in A$.


		The quotient $J/R$ is a $1$-dimensional algebra spanned by the image $\bar p_1$ of $p_1 \in J$,
		which satisfies $\bar p_1 \cdot \bar p_1 = \bar p_1$,
		so $J/R$ has unit $\bar p_1$
		and is isomorphic to the Jordan algebra of the field $\FF$.
		Thus $J$ has a nonsolvable quotient,
		and in particular is not itself solvable.
		As $R$ has codimension~$1$,
		$R$~is the maximal solvable ideal, \ie the radical, of $J$.
			%
			%
	\end{proof}


\section{Classification of Jordan Matsuo algebras}
	\label{sec-class}

    In this final section, we will show that there are essentially no other examples of Jordan Matsuo algebras than the ones we have presented;
    see Theorem~\ref{thm jordan matsuo} below.

	In the proof, we will require two additional (well-known) definitions.
    \begin{definition}
        Let $G$ be a group, and let $D \subseteq G$ be a subset.
	    The {\em noncommuting graph} on $D$
    	is the graph $\cal D$ with vertices $D$
    	and edges $\{c,d\}$ for $c,d\in D$ with $[c,d]\neq 1$.
    \end{definition}
    If $(G,D)$ is a $3$-transposition group, then the noncommuting graph $\cal D$ on $D$
    is related to the Fischer space $\Fspace$ of $(G,D)$:
	it is precisely its {\em collinearity graph},
    i.e.\@ the graph with vertex set equal to the point set of $\Fspace$, where two distinct vertices are connected by an edge
    if and only if they are collinear in the geometry $\Fspace$.

	Such noncommuting graphs can also be seen as (simply-laced) Coxeter diagrams.
    The following definition provides some sort of converse in this respect.

    \begin{definition}
    	Let $\cal D$ be a graph in which two vertices lie on at most a single edge.
    	Let $D'$ be a set of generators in bijection with the vertices of $\cal D$,
    	and let $G$ be the group with presentation
    	\begin{equation}
    		G = \bigl\langle d\in D' \mid d^2 = 1, \quad
    		\size{cd} = 3 \text{ for }c\in D'\text{ collinear, }
    		\size{cd}=2 \text{ otherwise}
    		\bigr\rangle,
    	\end{equation}
    	and set $D = {D'}^G$.
    	We call the pair $(G,D)$ the {\em Coxeter group $\op{Cox}(\cal D)$ on $\cal D$}.
    \end{definition}

    We now come to our classification result.
	\begin{theorem}
		\label{thm jordan matsuo}
		Let $J$ be a finite-dimensional Jordan algebra over $\FF$
		which is also a Matsuo algebra $M_{\nicefrac{1}{2}}(\Fspace)$.
		Then $\Fspace$ is a disjoint union of Fischer spaces $\cal P_3$ and $\Fspace(\rt A_n)$, $n \geq 1$.
		In particular, $J$ is a direct product of the Jordan algebras described in
		Theorems~\ref{thm jordan matsuo an}, \ref{thm jordan matsuo pi3} and \ref{thm jordan matsuo pi3-ch3}.
	\end{theorem}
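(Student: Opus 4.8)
Proof proposal (plan).

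The plan is to reduce to the case that $\Fspace$ is connected and then to pin down the connected Fischer spaces whose Matsuo algebra satisfies the Jordan identity, using the fact that subalgebras of Jordan algebras are Jordan. First I would note that if $\Fspace$ is the disjoint union of its connected components $\Fspace_i$, then by~\eqref{eq matsuo mult} distinct components multiply to $0$, so $M_{\nicefrac{1}{2}}(\Fspace) = \bigoplus_i M_{\nicefrac{1}{2}}(\Fspace_i)$; a direct sum of algebras satisfies the Jordan identity if and only if each summand does, so it suffices to treat connected $\Fspace$. By Theorem~\ref{th:3tr-fisch} I may identify such an $\Fspace$ with the Fischer space of the center-free $3$-transposition group $(G,D) = (G_\Fspace, D_\Fspace)$, which is indecomposable, has $D$ a single conjugacy class, and (since $\Fspace$ is nondegenerate) is not a $2$-group. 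The goal becomes: show $\Fspace \cong \cal P_3$ or $\Fspace \cong \Fspace(\rt A_{n})$.

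The key device is that every subspace $S \subseteq \Fspace$ spans a subalgebra of $M_{\nicefrac{1}{2}}(\Fspace)$ isomorphic to $M_{\nicefrac{1}{2}}(S)$, which must therefore also be Jordan; so $\Fspace$ can contain no subspace $S$ with $M_{\nicefrac{1}{2}}(S)$ non-Jordan. Two forbidden configurations drive the argument. (1) $M_{\nicefrac{1}{2}}(\Fspace(\rt D_4))$ is \emph{not} a Jordan algebra (it is $12$-dimensional, and the Remark following Theorem~\ref{thm jordan matsuo an} exhibits a proper quotient onto the $10$-dimensional Jordan algebra of symmetric $4\times4$ matrices); hence $\Fspace$ contains no subspace isomorphic to $\Fspace(\rt D_4)$. (2) If $\Fspace$ contains a subspace isomorphic to $\cal P_3$, then that subspace is a full connected component, so $\Fspace \cong \cal P_3$. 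For (2) I would argue as follows: if $P \cong \cal P_3 \subsetneq \Fspace$ with $\Fspace$ connected, some point $y \notin P$ is collinear to $P$, and then $\langle P \cup \{y\}\rangle$ is generated by at most four points and properly contains $P$; enumerating the (finitely many) rank-$4$ extensions of $\cal P_3$ inside a Fischer space — e.g.\ the affine space $\mathrm{AG}(3,3) = \Fspace(3^3:2)$, whose $27$-dimensional Matsuo algebra is not Jordan — and checking each fails the Jordan identity gives the contradiction. (Here the Peirce decomposition relative to a point $x$, for which $J_1^x = \langle x\rangle$ by Lemma~\ref{lem-eigv-matsuo} so $x$ is a primitive idempotent, together with the multiplication rule~\eqref{eq matsuo mult}, turns the Jordan identity into an explicit linear constraint on these small configurations.) After (1) and (2) I may assume $\Fspace$ is of symplectic type (no $\cal P_3$) and contains no $\Fspace(\rt D_4)$.

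To finish I would invoke the classification of $3$-transposition groups (Fischer; see \cite{asch3}, \cite{fischer}): an indecomposable $(G,D)$ with $D$ a single class and Fischer space of symplectic type is, up to center, one of $\Sym(n)$, $\mathrm{Sp}_{2n}(2)$, $\mathrm{O}^{\pm}_{2n}(2)$, or $\mathrm{U}_n(2)$ (the remaining families of Fischer's list — the characteristic-$3$ orthogonal groups and the Fischer groups $Fi_{22}, Fi_{23}, Fi_{24}$ — all have $\cal P_3$ in their Fischer space and were excluded by (2)). For each of the symplectic-type families other than $\Sym(n)$ one exhibits inside $G$ a subgroup generated by $3$-transpositions isomorphic to the Weyl group $\rt W(\rt D_4)$ with its reflections, i.e.\ a subspace of $\Fspace$ isomorphic to $\Fspace(\rt D_4)$, which is excluded by (1). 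Hence $\Fspace \cong \Fspace(\Sym(n)) = \Fspace(\rt A_{n-1})$. Combining with Theorems~\ref{thm jordan matsuo an}, \ref{thm jordan matsuo pi3} and~\ref{thm jordan matsuo pi3-ch3} identifies $J$ up to isomorphism as in the statement. (Phrased through the noncommuting graph $\cal D$ of $(G,D)$, which is the collinearity graph of $\Fspace$: $G$ is a quotient of $\op{Cox}(\cal D)$, and the two forbidden configurations are forbidden sub-structures; recognizing $\Fspace(\rt A_n)$ amounts to recognizing the triangular graphs.)

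The hard part will be the two verification steps underpinning (1) and (2) together with the recognition of $\Fspace(\rt A_n)$. Concretely: establishing that $M_{\nicefrac{1}{2}}(\Fspace(\rt D_4))$ and each rank-$4$ proper extension of $M_{\nicefrac{1}{2}}(\cal P_3)$ violate the Jordan identity (best done by direct expansion of~\eqref{eq matsuo mult} on a few well-chosen elements, or by computer, as in the proof of Theorem~\ref{thm jordan matsuo pi3-ch3}); and then the geometric input — either marshalling enough of the (nontrivial) classification of $3$-transposition groups to see that every family other than $\Sym(n)$ and $3^2:2$ contains one of these forbidden subspaces, or replacing this by a self-contained induction on $\size{\Fspace}$ that recognizes $\Fspace(\rt A_n)$ from the constraint that all of its rank-$\leq 4$ subspaces are of $\rt A$-type. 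The rest of the argument — the reduction to connected $\Fspace$ and the dimension/Peirce bookkeeping — is routine.
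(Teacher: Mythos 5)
Your overall strategy --- reduce to connected $\Fspace$, use the fact that every subspace of $\Fspace$ spans a subalgebra of $M_{\nicefrac{1}{2}}(\Fspace)$ that must again be Jordan, and rule out forbidden configurations by explicit failures of the Jordan identity --- is sound and matches the computational core of the paper's proof. But there are two genuine gaps. First, your justification that $M_{\nicefrac{1}{2}}(\Fspace(\rt D_4))$ is not a Jordan algebra is a non-sequitur: the existence of a proper quotient which \emph{is} a Jordan algebra says nothing about whether the algebra itself satisfies the Jordan identity. The claim is true, but it has to be verified directly; the paper does this in its rank-$4$ analysis by exhibiting $a,b,c,d\in D$ with $((xx)d)x\neq (xx)(dx)$ for $x=a+b+c$ and comparing coefficients ($\tfrac{3}{8}$ versus $\tfrac{7}{16}$ for $W_2(\affrt A_3)$, whose Fischer space is the relevant $12$-point symplectic-type space). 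You do flag this as requiring computation, so this gap is repairable. Likewise, your ``enumeration of the rank-$4$ extensions of $\cal P_3$'' is not a routine step: it is precisely Hall's classification of rank-$4$ Fischer spaces (quotients of $W_3(\affrt A_3)$, $2^{1+6}{:}SU_3(2)'$ and $3^{10}{:}2$), which must be quoted and then worked through case by case, as the paper does.

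Second, and more seriously, your final recognition step is incorrect as stated. The classification you invoke --- indecomposable, single class, symplectic type implies $\Sym(n)$, $\mathrm{Sp}_{2n}(2)$, $\mathrm{O}^{\pm}_{2n}(2)$ or $\mathrm{U}_n(2)$ up to center --- is not Fischer's theorem. Fischer's theorem carries the hypothesis that the normal $2$- and $3$-subgroups are trivial (or central); without it one is in the territory of the Cuypers--Hall classification of center-free $3$-transposition groups, which contains whole additional families of symplectic type with nontrivial solvable radical (for instance all the $W_2(\affrt A_n)$ and $2$-group extensions such as $2^{1+6}{:}SU_3(2)'$). These are exactly the cases that require the most work, and your argument excludes only the single instance $\Fspace(\rt D_4)$ among them; the implication ``connected, symplectic type, no $\Fspace(\rt D_4)$ subspace $\Rightarrow$ $\Fspace(\rt A_n)$'' is left unproved. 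The paper avoids the global classification entirely: after rank $\leq 4$ is settled, it observes that in rank $r\geq 5$ every $4$-generated subspace must be $\Fspace(\rt A_4)$, so the noncommuting graph of a generating set has no vertex of valency $\geq 3$ and is a path or a cycle; the path gives $\Sym(r+1)$, and the cycle makes $G$ a quotient of $W_k(\affrt A_{r-1})$, which contains the forbidden $W_k(\affrt A_3)$. I recommend replacing your appeal to the classification by this local argument (or else being prepared to marshal the full Cuypers--Hall theory, which is a far heavier tool than the problem requires).
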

	\begin{proof}
		Suppose that $\Fspace$ is a Fischer space
		and that $\Fspace = \Fspace_0\cup\Fspace_1$
		is a partition into two mutually disconnected nontrivial Fischer spaces.
		Then $M_\al(\Fspace) = M_\al(\Fspace_0) \times M_\al(\Fspace_1)$ as $\FF$-algebras.
		Hence we may assume without loss of generality that $\Fspace$ is connected,
		and we proceed to show that $\Fspace$ is either $\Fspace(\rt A_n)$ for some $n \geq 1$ or $\cal P_3$.

		Recall that the rank of a connected Fischer space $\Fspace$
		is the size of a smallest collection of points generating $\Fspace$.
		We proceed case-by-case for Fischer spaces of rank at most $4$.
		A connected Fischer space of rank $1$ is a single point,
		and the associated $1$-dimensional algebra is obviously Jordan;
		this is the case $\Fspace = \Fspace(\rt A_1)$.
		A connected Fischer space of rank $2$ is a line, generated by two points,
		so the only Matsuo-Jordan algebra here is $M_{\nicefrac{1}{2}}(\Fspace(\rt A_2))$.
		Positive answers in rank $3$,
		that is, for $\cal P_2^\vee\cong \Fspace(\rt A_3)$ and $\cal P_3$,
		are given by Theorems~\ref{thm jordan matsuo an}, \ref{thm jordan matsuo pi3} and \ref{thm jordan matsuo pi3-ch3}.
		Recall that these are the only Fischer spaces of rank 3 by definition.

		The rank $4$ Fischer spaces are classified by \cite{hall-genth3}*{Proposition 2.9}.
		They are the Fischer space $\Fspace(\rt A_4)$
		and the Fischer spaces of quotients of the $3$-transposition groups
		\begin{equation}
			\label{eq 4rk fs}
			\begin{gathered}
				(W_2(\affrt A_3),D_2), \quad
				(W_3(\affrt A_3),D_3), \\
				(G_4 = 2^{1+6}:SU_3(2)',D_4), \quad
				\text{ and M. Hall's } (G_5 = 3^{10}:2,D_5).
			\end{gathered}
		\end{equation}
		To define $W_k(\affrt A_n)$,
		let $G'$ be the $\FF_k$-linear permutation representation of $\Sym(n+1)$,
		that is, the semidirect product of $\Sym(n+1)$
		with the module $\FF_k^{n+1}$,
		where the action is by permutation of the standard ordered basis $\{v_1,\dotsc,v_{n+1}\}$ of $\FF_k^{n+1}$,
		and let $D'$ be the conjugacy class of the image of $(1,2)\in\Sym(n+1)$
		in the semidirect product $G'$.
		Then $W_k(\affrt A_n)$ is the quotient $(G,D)$
		of $(G',D')$ by the diagonal $\la v_1+\dotsm+v_{n+1}\ra$.

		The latter two groups are defined by presentations.
		Let $\cal C$ be the complete graph on $\{a,b,c,d\}$,
		and $\cal C'$ the graph obtained from $C$ by deleting the edge $\{b,c\}$.
		Then
		\begin{align}
			G_4 & = \op{Cox}(\cal C') / \bigl((a^bd)^3 = (a^cd)^3 = (a^{bc}d)^3 = 1 \bigr), \\
			G_5 & = \op{Cox}(\cal C) / \bigl((b^cd)^3 = (a^bc)^3 = (a^bd)^3 = (a^cd)^3
				 = (a^{bd}c)^3 = (a^{cd}b)^3 = (a^{dc}b)^3 = 1\bigr).
		\end{align}
		Let $D_i$ be the image of the Coxeter involutions closed under conjugation
		in the above quotient for $i=4,5$.
		Then $D_i$ generates $G_i$ and $(G_i,D_i)$ is a $3$-transposition group.

		It follows by Theorem~\ref{thm jordan matsuo an} that the Matsuo algebra
		of $\Fspace(\rt A_4)$ is Jordan.
		This is the only one out of the five groups
		which gives a Jordan algebra.
		For the others, we will, in each case, choose $4$ generating transpositions $a,b,c,d$ of $G$,
		where $G$ is one of $W_2(\affrt A_3),W_3(\affrt A_3),G_4$ or $G_5$,
		such that for $x =	a +	b +	c$ in the algebra $M_{\nicefrac{1}{2}}(\Fspace)$,
		we find $(xx)( dx) \neq ((xx) d)x$,
		whence $A$ is not Jordan.
		We show that $A$ is not Jordan for $G = W_k(\affrt A_3)$ and $k = 2,3$
		by explicit calculation:
		set $a',b',c',d'$ equal to
		\begin{equation}
			\begin{pmatrix}[0.8]
				0 & 1 & 0 & 0 & 0 \\
				1 & 0 & 0 & 0 & 0 \\
				0 & 0 & 1 & 0 & 0 \\
				0 & 0 & 0 & 1 & 0 \\
				0 & 0 & 0 & 0 & 1
			\end{pmatrix},\quad
			\begin{pmatrix}[0.8]
				1 & 0 & 0 & 0 & 0 \\
				0 & 0 & 1 & 0 & 0 \\
				0 & 1 & 0 & 0 & 0 \\
				0 & 0 & 0 & 1 & 0 \\
				0 & 0 & 0 & 0 & 1
			\end{pmatrix},\quad
			\begin{pmatrix}[0.8]
				1 & 0 & 0 & 0 & 0 \\
				0 & 1 & 0 & 0 & 0 \\
				0 & 0 & 0 & 1 & 0 \\
				0 & 0 & 1 & 0 & 0 \\
				0 & 0 & 0 & 0 & 1
			\end{pmatrix},\quad
			\begin{pmatrix}[0.8]
				0 & 0 & 0 & 1 & 0 \\
				0 & 1 & 0 & 0 & 0 \\
				0 & 0 & 1 & 0 & 0 \\
				1 & 0 & 0 & 0 & 0 \\
				1 & 0 & 0 & -1 & 1
			\end{pmatrix},
		\end{equation}
		respectively;
		then $\{ a', b', c', d'\}$ generates $G' = k^4:\Sym(4)$,
		and if $n = 
			\left(
			\begin{array}{c|c}
				I_4 & 0 \\
				\hline
				1 & 1 \\
			\end{array}
			\right)$,
		then $G = G'/\la n\ra$,
		$a = a'/\la n\ra$ and likewise define $b,c,d$.
		It is easy to check that $a,b,c,d$ are conjugate,
		and act as transpositions on $k^3\subseteq W_k(\affrt A_n)$,
		so that $a,b,c,d\in D$.
		For $(G,D) = W_2(\affrt A_3)$,
		the coefficient of $ a$ in $((xx) d)x$ is $\frac{3}{8}$
		and the coefficient of $ a$ in $(xx)( dx)$ is $\frac{7}{16}$.
		For $(G,D) = W_3(\affrt A_3)$, the respective coefficients are $\frac{13}{32}$ and $\frac{7}{16}$.
		We see that $\frac{3}{8},\frac{13}{32}\neq\frac{7}{16}$ in any characteristic (not $2$ by assumption).
		In each case this shows that the Jordan identity does not hold.

		Abusively, let now $a,b,c,d$ stand for the images of $a,b,c,d$
		under the quotient $\op{Cox}(\cal C')\to G_4$
		or $\op{Cox}(\cal C)\to G_5$.
		Then $x =	a +	b +	c$ in the algebra
		again gives $((xx) d)x \neq (xx)( d x)$.
		In both cases, the idempotent corresponding to $a^{cdb}$
		has a nonzero contribution, namely with coefficient $-\frac{1}{32}$,
		on only the lefthand side.
		Therefore the Matsuo algebras of the Fischer spaces of $(G_4,D_4)$ and $(G_5,D_5)$
		are also not Jordan algebras.

		This handles the cases when $(G,D)$ is one of the $3$-transposition groups in \eqref{eq 4rk fs}.
		These groups admit finitely many quotients with rank $4$ Fischer spaces,
		for which the same method shows that
		the coefficients of some element in $(xx)(dx)$ and $((xx)d)x)$
		differ by $\frac{1}{32}$ or $\frac{1}{64}$,
		whence these quotients cannot give rise to Jordan algebras either.

		Suppose that $(G,D)$ is a transposition group
		whose Fischer space $\Fspace$ has rank $r$ at least $5$,
		such that the Matsuo algebra $A = M_{\nicefrac{1}{2}}(G,D)$
		is Jordan.
		If $T\subseteq D$ is a generating set for $G$
		and $\cal T$ is the noncommuting graph on $T$,
		then $G$ is a quotient of the Coxeter group on $\cal T$.
		Suppose that the subspace spanned by $T' = \{d_1,\dotsc,d_4\}$
		has rank $4$ in $\Fspace$.
		By the above, $\la T'\ra\cong\Sym(5)$
		and $\cal T'$ is a line with $4$ nodes,
		since the subalgebra of $A$
		generated by $ d_1,\dotsc, d_4$ must itself be Jordan.
		Therefore if $T = \{d_1,\dotsc,d_r\}\subseteq D$
		is a set of generators for $G$
		(which is connected, since the noncommuting graph on $D$ is connected),
		then no vertex has valency $3$ in $\cal T$.
		Therefore $\cal T$ is either a line or a loop,
		corresponding to $\rt A_r$ or or $\affrt A_{r-1}$.
		By Theorem~\ref{thm jordan matsuo an},
		$M_{\nicefrac{1}{2}}(\Fspace(\rt A_r))$ is Jordan.
		Suppose $\cal T$ is $\affrt A_{r-1}$.
		Then $G$ is a quotient of $W_k(\affrt A_{r-1})$ \cite{hall-genth3}*{p.\@ 272}.
		But $W_k(\affrt A_{r-1})$ admits an embedding of $W_k(\affrt A_3)$
		for all $r\geq 5$: for
		\begin{equation}
			\begin{gathered}
				a' = \begin{pmatrix}[0.8]
					0 & 1 \\
					1 & 0 \\
				\end{pmatrix}\oplus I_{r-2},\quad
				b' = (1)\oplus
				\begin{pmatrix}[0.8]
					0 & 1 \\
					1 & 0 \\
				\end{pmatrix}\oplus I_{r-3},\quad
				c' = I_2\oplus
				\begin{pmatrix}[0.8]
					0 & 1 \\
					1 & 0 \\
				\end{pmatrix}\oplus I_{r-4},\quad \\
				d' = \left(
				\begin{array}{cccc|c|c}
					0 & 0 & 0 & 1 & 0 & 0 \\
					0 & 1 & 0 & 0 & 0 & 0 \\
					0 & 0 & 1 & 0 & 0 & 0 \\
					1 & 0 & 0 & 0 & 0 & 0 \\
					\hline
					0 & 0 & 0 & 0 & I_{r-5} & 0 \\
					\hline
					1 & 0 & 0 & -1 & 0 & 1 \\
				\end{array}
				\right),\quad
				n = \left(
				\begin{array}{c|c}
					I_{r-1} & 0 \\
					\hline
					1 & 1 \\
				\end{array}
				\right),
			\end{gathered}
		\end{equation}
		we have that $W_k(\affrt A_{r-1})$
		is the quotient of $k^r:\Sym(r)$ by $\la n\ra$,
		and $a,b,c,d$ the images of $a',b',c',d'$ in $W_k(\affrt A_{r-1})$
		generate a subgroup $W_k(\affrt A_3)$.
		Therefore the Matsuo algebra of $W_k(\affrt A_3)$ is a subalgebra of $A$,
		which is not Jordan, so $A$ is not Jordan.
		Hence the only possibility in rank $r \geq 5$ is that $\cal T$ is $\rt A_r$.
	\end{proof}





\end{document}